\documentclass[10pt]{article}
\usepackage[utf8]{inputenc}
\usepackage[american,british]{babel}
\usepackage{amsmath,amsthm,amsfonts,amssymb,amscd,dsfont}
\usepackage{thmtools}
\usepackage{mathrsfs}
\usepackage{mathtools}
\usepackage{stmaryrd}
\usepackage{physics}
\usepackage{subcaption}
\usepackage{graphicx}
\usepackage{hyperref}
\hypersetup{
    colorlinks=true,
    linkcolor=blue,
    citecolor=red
    }    
\usepackage[capitalize]{cleveref}  
\Crefname{equation}{}{}

\usepackage{enumitem}
\usepackage[table]{xcolor}
    
\usepackage{fullpage}
\usepackage{lastpage}  

\usepackage{tikz}
\usetikzlibrary{decorations.markings}
\usetikzlibrary{positioning}

\numberwithin{equation}{section}

\newtheoremstyle{break}%
  {}{}%
  {\itshape}{}%
  {\bfseries}{}
  {\newline}{}

\theoremstyle{break}

\newtheorem{theorem}{Theorem}[section]
\newtheorem{definition}[theorem]{Definition}

\newtheorem{lemma}[theorem]{Lemma}
\newtheorem{proposition}[theorem]{Proposition}

\newtheorem{maintheorem}{Theorem}

\crefname{maintheorem}{theorem}{theorems}

\theoremstyle{plain}
\newtheorem{remark}[theorem]{Remark}


\newcommand{\V}[1]{\mathbf{#1}}

\newcommand{\T}{\mathbb{T}}

\newcommand{\N}{\mathbb{N}}

\newcommand{\Z}{\mathbb{Z}}
\newcommand{\R}{\mathbb{R}}
\newcommand{\C}{\mathbb{C}}
\newcommand{\E}{\mathbb{E}}

\renewcommand{\P}{\mathbb{P}}

\newcommand{\Dcal}{\mathcal{D}}

\newcommand{\Gcal}{\mathcal{G}}
\newcommand{\bigO}{\mathcal{O}}

\newcommand{\Ecal}{\mathcal{E}}

\newcommand{\Xcal}{\mathcal{X}}
\newcommand{\Lcal}{\mathcal{L}}

\newcommand{\zz}{\V{z}}

\newcommand{\eps}{\epsilon}

\newcommand{\vp}{\varphi}
\renewcommand{\mod}[1]{\,(\mathrm{mod}\, #1)}

\newcommand{\df}{\operatorname{d}\!}
\newcommand{\dx}{\df x}
\newcommand{\dy}{\df y}



\renewcommand{\d}{\, \mathrm{d}}
\renewcommand{\Re}{\mathrm{Re} \,}
\renewcommand{\Im}{\mathrm{Im} \,}

\DeclareMathOperator{\dist}{\mathrm{dist}}

\DeclareMathOperator{\supp}{\mathrm{supp}}

\DeclareMathOperator{\Var}{\mathrm{Var}}


\newcommand{\rbr}[1]{\left( #1 \right)}
\newcommand{\abr}[1]{\left[ #1 \right]}

\renewcommand{\abs}[1]{\left | #1 \right |} 

\renewcommand{\norm}[1]{\left\lVert#1\right\rVert}


\newcommand{\1}{\mathbf{1}}
\newcommand{\half} {\frac{1} {2} }

\newcommand{\Unif}{\mathrm{Unif}}

\begin{document}
\title{Dirichlet $L$-functions on the critical line and multiplicative chaos}
\author{Sami Vihko$^1$}

\date{$^1$University of Helsinki, Department of Mathematics and Statistics, P.O. Box 68, FIN-00014 University of Helsinki, Helsinki, Finland \\[2ex]   
}

\maketitle
\tableofcontents

\begin{abstract}
In this article, we prove that the Dirichlet $L$-functions $L(1/2+ix,\chi_q)$, where $\chi_q$ is a uniformly random Dirichlet character modulo $q$ and $x\in \R$, converge as $q\to \infty$ to a random Schwartz distribution $\zeta_{\mathrm{rand}}$, which is related to complex Gaussian multiplicative chaos (GMC). This is the same limiting object that appeared in \cite{SaWe20a}, where the authors proved that the random shifts of the Riemann zeta function on the critical line $\zeta(1/2+ix+i\omega T)$, where $\omega\sim \mathrm{Unif} ([0,1])$, converge as $T\to \infty$. As an application of our method, we also prove that to the right of the critical line, that is, for $\Re(s)>1/2$ the convergence of the random $L$-functions $L(s,\chi_q)$ occurs in the space of analytic functions, if we exclude the zero probability (in the limit $q\to \infty$) event that $\chi_q$ is the principal character. 
\end{abstract}
\begin{section}{Introduction}
\label{sec:introduction}
The central objects of investigation in this article are the Dirichlet $L$-functions $L(s,\chi)$, where $s$ is a complex variable\footnote{We shall use the common convention in analytic number theory that $s$ instead of $z$ denotes a complex variable.}, and $\chi\colon \N\to\C$ is a function called a Dirichlet character. See \Cref{sec:Basics of characters and Lfunctions} below for proper definitions. Dirichlet characters $\chi$ depend on a positive integer $q$ called the modulus of the Dirichlet characters. For fixed $q$ there are multiple Dirichlet characters and their number is given by the Euler totient function $\vp(q)$. In this paper, we study $L(s,\chi_q)$ with $\Re(s)\geq 1/2$ and $\chi_q$ is chosen uniformly randomly from the set of Dirichlet characters modulo $q$. The main interest is on the critical line $\Re(s)=1/2$. More precisely, we study $L(s,\chi_q)$ as $q\to \infty$. On the critical line, we prove that in a certain sense the limit exists and the limiting object is the random generalized function $\zeta_{\mathrm{rand}}$ called the randomized zeta function. It appeared also as a limiting object in \cite{SaWe20a} in the study of statistical behaviour of the Riemann zeta function $\zeta(s)$ on the critical line and asymptotically far from the real axis, that is, as the $T\to\infty$ limit of the random shifts $\zeta(s+i\omega T)$ with $\omega\sim \Unif([0,1])$. $\zeta_{\mathrm{rand}}$ is related to a very special random generalized function called (complex) Gaussian multiplicative chaos (GMC), see \Cref{sec:basics of GMC and description of zetarand} for more information. GMC has been ubiquitous in many models of mathematical physics and probability theory over roughly the past 15 years, see for example \cite{AsJoKu11a,DuSh11a,DaKuRh16a,ArBeHa17a,ArHaKi22a}.  

Dirichlet $L$-functions are generalizations of the famous Riemann zeta function defined by 
\begin{equation}
\label{eq:def of zeta}
\zeta(s):=\sum_{n=1}^\infty n^{-s}
\end{equation}
for $\Re(s)>1$, and analytically continued to $\C \setminus \{1\}$. They were originally used to study primes in arithmetic progressions $a+nd$, where $a,d\in\N$ are relatively prime and $n$ is a positive integer. These functions can also be defined as a series in the same half-plane as $\zeta$, that is,
\begin{equation}
\label{eq:def of L-functions}
L(s,\chi):=\sum_{n=1}^\infty \frac{\chi(n)}{n^{s}}.
\end{equation}
They share many properties with the Riemann zeta function. In particular, the $L$-functions also have an analytic continuation to $\C \setminus \{1\}$.  Series such as \Cref{eq:def of zeta} and \Cref{eq:def of L-functions} are in general called Dirichlet series. 

The study of the behaviour of such objects on the critical line $\Re(s)=1/2$ is notoriously difficult. This is no surprise since the distribution of the zeros of the Riemann zeta function in the critical strip $\Re(s)\in (0,1)$ is related to many open questions about the prime numbers. The Riemann-hypothesis (RH) and its generalization to Dirichlet $L$-functions (GRH) state that all non-trivial zeros of the associated functions are located on the critical line. Both the Riemann $\zeta$-function and Dirichlet $L$-functions admit an Euler product representation, from which the connection to prime numbers is evident
\begin{align}
\label{eq:Euler product for zeta}
\zeta(s)&=\prod_p (1-p^{-s})^{-1}
\\
\label{eq:Euler product for L}
L(s,\chi)&=\prod_p(1-\chi(p)p^{-s})^{-1},
\end{align}
valid in the same region $\Re(s)>1$ as the series representations. Above the product ranges over all primes $p$. This is easy to verify by using geometric series and the uniqueness of prime factorization.

Probabilistic methods have been long used in analytic number theory to study the generic behaviour of the Riemann zeta function. Often in this context the term used is statistical behaviour. One of the first and most important such results on the critical line, is the classical central limit theorem of Selberg from the 1940s \cite{Se46a}. 
An elegant new proof for Selberg's central limit theorem was provided recently in \cite{RaSo17a}. Assuming the GRH Selberg himself generalized this for a certain class of Dirichlet series that also contain a class of Dirichlet $L$-functions called the primitive $L$-functions \cite{Se91a}. A new proof of this result for the primitive $L$  -functions was also provided recently in \cite{HsWo20a} in the spirit of \cite{RaSo17a}. These results tell us roughly that at a random point with $\Im(s)\in [T,2T]$ and large $T>0$ along the critical line $\Re(s)=1/2$ the logarithm of the $\zeta$-function and certain $L$-functions behave like a mean zero Gaussian with a variance that diverges as $T\to \infty$. The divergence is of order $\log\log(T)$. Therefore, to properly formulate a central limit theorem the logarithm of the zeta function must be multiplied with a vanishing (as $T\to\infty$) normalization constant proportional to $(\log\log(T))^{-1/2}$.   

From the perspective of this paper, there is another relevant question. Namely, does something similar hold at any point on the critical line if we take a random Dirichlet character $\chi$ modulo $q$, consider the logarithm of $L(\cdot,\chi)$ at that point and let $q$ tend to infinity? In other words, does the logarithm of $L(1/2+ix,\chi)$ behave like a Gaussian with a variance that diverges as we take $q\to \infty$ for any $x\in\R$? This is a hard open question since it is difficult to control the zeros of Dirichlet $L$-functions as $q$ and the characters modulo $q$ vary. There has been some evidence to this direction, see for example \cite{KeSn00b,KeSn03a}. In these papers, the authors conjecture such behaviour based on connections with random matrix theory and numerical computations. See also \cite{BuEvLe25a}, where the authors prove a version of this type of central limit theorem with respect to a weighted measure and assuming the GRH. All of these only consider special cases of the central value $s=1/2$ and the class of primitive $L$-functions.

The connection between random matrix theory and the study of Riemann $\zeta$- and Dirichlet $L$-functions dates back Montgomery and Dyson in the 1970s who conjectured that statistically the zeros of Riemann $\zeta$-function behave as the eigenvalues of a large Hermitian random matrix. See the book review
 \cite{Co01a} by Conrey for early developments in this direction of investigation. Since the turn of the millennium the connection with random matrices has provided lots of new conjectures. We do not try to list them here, but only mention the ones that are related or similar in some sense to our results. Firstly, Keating and Snaith realized that statistically the $\zeta$-function on the critical line could be modelled by the characteristic polynomial of a random unitary matrix \cite{KeSn00a}. Based on this they made predictions about the value distribution and moments of the $\zeta$-function. As already mentioned the same authors made also conjectures about Dirichlet $L$-functions for large modulus $q$ \cite{KeSn00b,KeSn03a}. Then about a decade ago the papers \cite{FyHiKe12a,FyKe14a} by Fyodorov, Hiary and Keating, and Fyodorov and Keating respectively predicted that on the global and mesoscopic scales and on the critical line $\Re(s)=1/2$ the logarithm of the $\zeta$-function should behave like a log-correlated random field, that is, a random generalized function with a logarithmic singularity on the diagonal of its covariance kernel. Like the GMC mentioned above these fields are ubiquitous in many models of mathematical physics and probability theory. GMC is usually constructed from such fields. It can also be used to study some properties of the fields, for example the maxima of the field and some fractal properties. See, for example, the article \cite{Ma15a}, the review \cite{BaKe22a} and references therein for the maximum and the review \cite{RhVa14a} and references therein for the fractal properties.

The first rigorous results for the functional statistical behaviour of the $\zeta$-function on the critical line were achieved in \cite{SaWe20a}. Away from the critical line earlier results exist (see the book \cite{La96a} by Laurin\u{c}ikas and references therein). The results in \cite{SaWe20a} are unconditional, that is, they do not assume RH. The result \cite[Theorem 1.1]{SaWe20a} states that in the sense of generalized functions or Schwartz distributions
\begin{equation}
\zeta(1/2+ix+i\omega T)\overset{d}{\to}\zeta_{\mathrm{rand}}(1/2+ix).
\end{equation}
as $T\to \infty$. Above $\omega\sim\Unif([0,1])$. Our main result states that as generalized functions
\begin{equation}
L(1/2+ix,\chi_q)\overset{d}{\to}\zeta_{\mathrm{rand}}(1/2+ix)
\end{equation}
as $q\to \infty$. More precisely, we show convergence integrated against compactly supported smooth test function $f$. Above $\chi_q$ is uniformly random Dirichlet character modulo $q$. We also do not assume GRH.  The limiting object can be understood in multiple ways. Formally we can write it as the randomized Euler product
\begin{equation}
\label{eq:formal product for zetarand}
\zeta_{\mathrm{rand}}(s)=\prod_{p}(1-\omega_pp^{-s})^{-1},
\end{equation}
where $\omega_p$ are i.i.d random variables indexed by prime numbers and uniformly distributed on the complex unit circle $\T:=\{z\in\C\mid |z|=1\}=\{e^{i\theta}\mid\theta\in [0,2\pi)\}$. For $\Re(s)>1/2$ the product above is well defined pointwise, see \Cref{sec:Convergence in the space of analytic functions} for more information. \cite[Theorem 1.6]{SaWe20a} states that $\zeta_{\mathrm{rand}}$ is an honest Schwartz distribution and not a complex function or a complex measure. Lastly in this picture, we mention that $\zeta_{\mathrm{rand}}$ can be understood as the boundary value of an analytic function $\zeta_{\mathrm{rand}}(s)$ on the critical line $\Re(s)=1/2$ in the sense on Schwartz distributions. The Euler product in \Cref{eq:formal product for zetarand} indeed converges for $\Re(s)>1/2$ and defines an analytic function, see \Cref{sec:Convergence in the space of analytic functions} for more information. 

The main motivation for this article is that the limiting object is related to the complex GMC, see \Cref{sec:basics of GMC and description of zetarand} for more details about this connection and further discussion. As already mentioned in the beginning these objects are encountered in diverse situation in applications of probability theory. In a sense they are ``universal". Our result further enhances this point of view although it is not so surprising given that many properties of the Riemann $\zeta$-function are shared  by Dirichlet $L$-functions.  

The proof of \cite[Theorem 1.1]{SaWe20a} uses similar multi-step approximation as our proof. The main difference is that they work mostly with Euler product representation and need to resort to the theory of one dimensional singular integrals to work in $L^2$ whereas we can work with Dirichlet series representations directly in $L^2$.  

\subsection*{Structure of the rest of the paper}
In the remaining subsections of the introduction, we first review the basics of Dirichlet characters and $L$-functions (\Cref{sec:Basics of characters and Lfunctions}), then discuss Gaussian multiplicative chaos and its relation to the randomized zeta function $\zeta_{\mathrm{rand}}$ and related questions (\Cref{sec:basics of GMC and description of zetarand}), and finally present the precise statement of our main result (\Cref{thm:main theorem}) and sketch the proof. We have collected all auxiliary results into \Cref{sec:some preliminary results}. Based on these uxiliary results we present the detailed proof of  \Cref{thm:main theorem} in \Cref{sec:proof of the main theorem}. Finally, in \Cref{sec:Convergence in the space of analytic functions}, we prove that essentially the same computations yield the convergence of the random Dirichlet L-functions $L(s,\chi_q)$ in the space of analytic functions on the half-plane $(\Re(s)>1/2)$ to the randomized zeta function, if we exclude the event $\{\chi_q=\chi_0\}$, that is, the principal character. 

\begin{subsection}{Basic definitions and properties of Dirichlet characters and $L$-functions}
\label{sec:Basics of characters and Lfunctions}
For the convenience of readers without a number theory background (like the author) we collect the basic definitions and properties needed from the theory of Dirichlet characters and $L$-functions in this section. Most of the material in this section can be found in any introductory book on (analytic) number theory, for example \cite{Ap76a}.

First we need Dirichlet characters. Dirichlet characters modulo $q$, a positive integer, are the group characters for the group of reduced residue classes, continued periodically, with the additional property that they vanish for integers not relatively prime to $q$. To be more precise, we first specify the group of reduced residue classes. A reduced residue system modulo $q$ is the set of 
\begin{equation}
\vp(q):=|\{k\in \N \mid k\leq q, \gcd(q,k)=1\}|
\end{equation}
integers $\{a_1,a_2,\dots,a_{\vp(q)}\}$ such that $a_i$ is relatively prime to $q$ for all $i$. $\vp$ is called the Euler totient function. Here $\gcd$ refers to greatest common divisor and as usual in number theory, we shall from now on use the shorthand notation $\gcd(x,y)\equiv (x,y)$.  The residue class corresponding to an integer $k$ is the set $\hat{k}:=\{x\mid x\equiv k \mod q\}$. If we define the product of two residue classes $\hat{k}$ and $\hat{l}$ by $\hat{k}\hat{l}:=\widehat{kl}$, then the set of reduced residue classes modulo $q$ (with restriction that $(x,k)=1$) with this product form an Abelian group of order $\vp(q)$. $\hat{1}$ is the identity and the inverse of $\hat{k}$ is $\hat{l}$ such that $kl\equiv 1$ $\mod
q$.

Recall that for Abelian groups a character is a homomorphism from the group to the complex numbers, not identically zero. For finite groups like in the following the target is the unit circle $\T$. Then we have:
\begin{definition}[Dirichlet characters]
Let $G$ be the group of reduced residue classes modulo $q$. For each character $f$ of   $G$ we define the corresponding function $\chi\colon \N \to \mathbb{T}$.
\begin{align}
\chi(n)&:=f(\hat{n}), \quad \text{ if $(n,q)=1$},
\\
\chi(n)&:=0, \quad\,\,\, \quad  \text{ if $(n,q)>1$}.
\end{align}
The function $\chi$ is called a Dirichlet character modulo $q$. The principal Dirichlet character is 
\begin{equation}
\chi_0(n)=
\begin{cases}
1, \quad \text{if $(n,q)=1$}
\\
0, \quad \text{if $(n,q)>1$}.
\end{cases}
\end{equation}
\end{definition}
There are $\vp(q)$ distinct Dirichlet characters modulo $q$. They are completely multiplicative and $q$-periodic, that is, for any $m$ and $n$
\begin{align}
\label{eq:multiplicativity}
\chi(mn)&=\chi(n)\chi(m)
\\
\label{eq:periodicity}
\chi(n+q)&=\chi(n),
\end{align}
see for example \cite[Theorem 6.15]{Ap76a}.
Furthermore, Dirichlet characters satisfy the following orthogonality relation. Let $\chi_i$, $i=0,1,\dots,\vp(q)-1$ be the Dirichlet characters modulo $q$, and let $m,n$ be two integers with $(n,q)=1=(m,q)$. Then we have 
\begin{equation}
\label{eq:orthogonality relation for Dirichlet characters}
\sum_{i=0}^{\vp(q)-1}\chi_i(n)\overline{\chi_i(m)}=
\begin{cases}
\vp(q), \quad \text{ if $m\equiv n \mod q$},
\\
0, \quad\quad\,\,\, \text{ if $m\not \equiv n \mod q$, }
\end{cases}
\end{equation}
see for example \cite[Theorem 6.16]{Ap76a}. Note that if $(n,q)>1$ or $(m,q)>1$, the result of the above sum is always zero. 

Next, let us consider Dirichlet $L$-functions. Proofs of all properties except the last listed below can be found in \cite[Chapter 12]{Ap76a}. Dirichlet $L$-functions depend on the integer $q$ through the Dirichlet characters modulo $q$. Given a Dirichlet character $\chi$ modulo $q$ we define for $\Re(s)>1$ 
\begin{equation}
\label{eq:series definition of L-functions}
L(s,\chi):=\sum_{n=1}^\infty\frac{\chi(n)}{n^s}. 
\end{equation}
This series converges absolutely and defines an analytic function in this half-plane for any character $\chi$ including the principal one $\chi=\chi_0$. For the principal character $\chi=\chi_0$ the region of convergence of the series \Cref{eq:series definition of L-functions} cannot be enlarged. However, for non-principal characters $\chi\neq\chi_0$ the series \Cref{eq:series definition of L-functions} converges in the half-plane $\Re(s)>0$, but the convergence is absolute only for $\Re(s)>1$. Nevertheless, the convergence is uniform on compact subsets of the full region of convergence $\Re(s)>0$ for non-principal characters $\chi\neq \chi_0$.

Furthermore, for any Dirichlet character $\chi$ the $L$-function $L(\cdot,\chi)$ can be continued to a meromorphic function in the whole complex plane $\C$. For non-principal characters ($\chi\neq\chi_0$) this continuation is even an entire function. The $L$-function $L(\cdot,\chi_0)$ corresponding to the principal character shares the analytic properties of the Riemann zeta function $\zeta$. Namely, it is meromorphic with a single pole at $s=1$ and this pole is of first order. Furthermore, we can express the $L$-function $L(s,\chi_0)$ in terms of the zeta function
\begin{equation}
\label{eq:principal Lfunction with zeta function}
L(s,\chi_0)=\zeta(s)\prod_{p|q}(1-p^{-s}),
\end{equation}
where the product runs over all distinct prime factors of $q$. 
This last claim follows easily from the Euler product forms \Cref{eq:Euler product for zeta} and \Cref{eq:Euler product for L} in their common validity range, and then by analytic continuation.

This is all we really need from the Dirichlet characters and corresponding $L$-functions.
\end{subsection}

\begin{subsection}{Short introduction to multiplicative chaos and the randomized zeta function $\zeta_{\mathrm{rand}}$ on the critical line.}
\label{sec:basics of GMC and description of zetarand}
For convenience of readers from a number theory background that may not be familiar with the theory of multiplicative chaos, we review only the essential definitions and properties of multiplicative chaos to understand the limiting object of our main result. 

The multiplicative chaos is formally the exponential $e^X$ of a specific random field $X$ on subset of $\R^d$. If $X$ were an honest random function say a.s. continuous, there should be no difficulties to define the exponential. However, this is not the case for the most important class of fields.  

The class of random fields most relevant for GMC developments, is the class of log-correlated Gaussian fields. We will first review the real case (with $d=1$ since this is the relevant case for us). Given a domain say an interval $I$, a real log-correlated Gaussian field $X$ on $I$ is a generalized random field with covariance kernel $C\colon I\times I\to\R$ given by
\begin{equation}
C(x,y)=\log(|x-y|^{-1})+g(x,y)
\end{equation}
with $g$ say continuous and bounded s.t. for all suitable real test functions $f$ the evaluation $X(f)$ is a real valued Gaussian random variable with variance 
\begin{equation}
\Var(X(f)):=\int_{I\times I}f(x)f(y)C(x,y)\dx\dy.
\end{equation}
Thus, $X$ is not an ordinary random function, but rather a random element in the space of generalized functions or Schwartz distributions. For the construction of such a field see for example \cite[Section 2.1] {JuSaWe20a}, where the authors prove that the zero continuation (meaning $X=0$ on $\R\setminus I$) of such a field exists as a random element in the Sobolev space $H^{-\eps}(\R)$ for all $\eps>0$.

Therefore, the exponential is not, a priori, well-defined. To define the multiplicative chaos we need to first regularize the field and then take a limit (an alternative and more general point of view is given in \cite{Sh16a}, where the GMC is characterized via a Cameron-Martin-Girsanov property). It turns out that in the real case the multiplicative chaos can be understood as a random measure. 

For the proofs of the following statements in the subcritical and critical case we refer to the papers \cite{RoVa10a,Be17a,DuRhSh14b} and the review papers \cite{RhVa14a,Po20a} and references therein. Given a suitable sequence $\{X_N\}_{N\in\N}$ of smooth  approximations of the field $X$ the sequence of random measures $\mu_{\gamma, N}(\dx):=\frac{e^{\gamma X_N}}{\E[e^{\gamma X_N}]}\dx$ with $\gamma\in\R$ converges to a random measure in the weak topology of the space of Radon measures. The stochastic nature of the convergence may depend on the approximating sequence, but the limiting object does not in most cases. The limiting measure is unique, but non-trivial only for $|\gamma|<\sqrt{2}$, and non-atomic in the same range. In the limiting case $|\gamma|=\sqrt{2}$, further renormalization is required to obtain a non-trivial limit. For $|\gamma|>\sqrt{2}$ yet a further procedure is needed and the limiting measure is then purely atomic measure. 

The rigorous study of multiplicative chaos was initiated by Kahane in 1985 \cite{Ka85a}. The original motivation was in the rigorous study of Kolmogorov's turbulence in fluid dynamics and constructing a continuous analogue of Mandelbrot's multiplicative cascades \cite{Ma74a,Ma74b}. 
We refer the reader to the nice review \cite{RhSoVa14a} and the elegant construction \cite{Be17a} in the sub-critical ($|\gamma|<\sqrt{2}$) case and to the review \cite{Po20a} and references therein for the critical $(|\gamma|=\sqrt{2})$ case. Note that the distribution is symmetric so that it suffices to consider $\gamma>0$.

To describe the limiting object $\zeta_{\mathrm{rand}}$ in our main theorem in terms of multiplicative chaos we need to consider the complex case. Then the limiting object is no longer necessarily a measure, but just a Schwartz distribution. In the complex case, we can either consider complex parameters $\gamma$ or allow the field $X$ to be complex. The latter case is studied in \cite{LaRhVa15a}. Actually, they study the case, where the real and imaginary parts of the field have separate real parameters, that is $\gamma X$ is replaced by $\gamma_1X_1+i\gamma_2X_2$ with $\gamma_i\in\R$ and $X_i$ real log-correlated fields. In our case, we would have $\gamma_1=1=\gamma_2$. In \cite{LaRhVa15a} the authors assume that the real ($X_1$) and imaginary ($X_2$) parts of the field are independent. However, this is inadequate to describe $\zeta_{\mathrm{rand}}$. 

Instead, we need very specific covariance structure for the real and imaginary parts. By  \cite[Theorem 1.1 (ii)]{SaWe20a} we can write as Schwartz distributions
\begin{equation}
\label{eq:zetarand in terms of GMC}
\zeta_{\mathrm{rand}}(1/2+ix)=g(x)\nu(x),
\end{equation}
where $g$ is a smooth random function on $\R$ with almost surely no zeros and $``\nu(x)=e^{\mathcal{G}(x)}"$ is multiplicative chaos corresponding to the Gaussian field $\mathcal{G}$ satisfying the covariance structure
\begin{align}
\E[\mathcal{G}(x)\mathcal{G}(y)]&=0
\\
\E[\mathcal{G}(x)\overline{\mathcal{G}}(y)]&=\log(\zeta(1+i(x-y)))
\end{align}
or equivalently
\begin{align}
\E\abr{\Re(\mathcal{G})(x)\Re(\mathcal{G})(y)}&=\E\abr{\Im(\mathcal{G})(x)\Im(\mathcal{G})(y)}=\half\log\abs{\zeta(1+i(x-y))}
\\
\E\abr{\Re(\mathcal{G})(x)\Im(\mathcal{G})(y)}&=-\E\abr{\Im(\mathcal{G})(x)\Re(\mathcal{G})(y)}=-\mathrm{Arg}(\zeta(1+i(x-y)))
\end{align}
for $x,y\in\R$. This translates to
\begin{equation}
\E[\Re(\mathcal{G})(x)\Im(\mathcal{G})(y)]=-\frac{\pi}{4}\mathrm{sgn}(x-y)+\text{smooth terms}. 
\end{equation}
Similar type of Complex multiplicative chaos has also appeared, for example, in \cite{NaPaSi23a} in the context of random unitary matrices. 

To prove \Cref{eq:zetarand in terms of GMC} the authors of \cite{SaWe20a} use the truncated randomized Euler product 
\begin{equation}
\zeta_{N,\mathrm{rand}}(s):=\prod_{p\leq N}(1-\omega_p p^{-s})^{-1},
\end{equation}
where $\omega_p\sim \Unif(\T)$ are i.i.d random variables, and the following decomposition:
\begin{equation}
\log(\zeta_{N,\mathrm{rand}}(1/2+ix))=\Gcal_N(x)+\Ecal_N(x),
\end{equation}
where $\Gcal_N$ is an explicit Gaussian process on $[-A,A]$ for any $A>0$ and $\Ecal_N$ is a smooth random function that almost surely converges uniformly to a smooth random function $\Ecal$ \cite[Theorem 1.7]{SaWe20a}. This decomposition allows them to show that $\zeta_{N,\mathrm{rand}}$ also gives rise to real multiplicative chaos. Namely, that the measures $C(N)|\zeta_{N,\mathrm{rand}}(x)|^\gamma/\E[|\zeta_{N,\mathrm{rand}}(x)|^\gamma]\dx$, where $C(N)$ is a renormalization constant (equal to $1$ unless $\gamma=\gamma_c=\sqrt{2}$), converge to a non-trivial limiting measure for all $\gamma\leq \sqrt{2}$. This measure can be described as a product of a well behaving random function and a real GMC \cite[Theorems 1.8 and 1.9]{SaWe20a}.

It is still an open question whether $\log(\zeta(1/2+ix+i\omega T))$ also converges to $\Gcal(x)+\Ecal(x)$. Similarly, it is still only a conjecture that $\mu_T(x):=\zeta(1/2+ix+i\omega T)$ gives rise to a real multiplicative chaos. Namely, whether the measures $C(T)|\mu_T(x)|^\gamma/\E[|\mu_T(x)|^\gamma]\dx$, where $C(T)$ is again a renormalizing constant only needed in the critical case, converge to a non-trivial limiting measure. The latter is likely a harder question since we would need to control the moments $\E[|\mu_T(x)|^\gamma]$. Similar questions for the functions $L(1/2+ix,\chi_q)$ are also out of reach, mainly because we lack the means to control the proportion of the characters for which $L(1/2+ix,\chi)\neq 0$ for given $x\in\R$ even for fixed modulus $q$ let alone in the limit $q\to \infty$. The moments $\E[|L(1/2+ix,\chi_q)|^\gamma]$ are also poorly understood. 
\end{subsection}

\begin{subsection}{The main result}
\label{sec:The main result}
Our main theorem is:
\begin{maintheorem}
\label{thm:main theorem}
Let $q$ be a positive integer, and denote by $\Xcal_q$ the set of all Dirichlet characters modulo $q$. Assume that $\chi_q$ is a uniformly random Dirichlet character, that is, $\chi_q\sim\mathrm{Unif}(\Xcal_q)$ and $f\in C_c^\infty(\R)$. Then
\begin{equation}
L_q(f):=\int_{\R}f(x)L(1/2+ix,\chi_q)\dx\overset{d}{\to}\zeta_{\mathrm{rand}}(f)
\end{equation}
as $q\to \infty$. 
\end{maintheorem}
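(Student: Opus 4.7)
My plan is a three-step truncation argument, mirroring \cite{SaWe20a} but exploiting that the Dirichlet series representation keeps all $L^2$ bookkeeping diagonal under the character orthogonality \eqref{eq:orthogonality relation for Dirichlet characters}. With $\hat f(\xi) := \int_\R f(x) e^{-i\xi x}\, dx$, introduce the truncated Dirichlet polynomials
$$\tilde L_{N,\chi}(f) := \sum_{n \leq N} \chi(n)\, n^{-1/2}\, \hat f(\log n), \qquad \tilde L_N^{\mathrm{rand}}(f) := \sum_{n \leq N} \omega_n\, n^{-1/2}\, \hat f(\log n),$$
where $\omega_n := \prod_p \omega_p^{v_p(n)}$ is the completely multiplicative extension of the i.i.d.\ family $\omega_p \sim \Unif(\T)$. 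The three limits I would establish are: (i) $L_q(f) - \tilde L_{N(q),\chi_q}(f) \to 0$ in $L^2$ for a suitable $N(q)\to\infty$ as $q\to\infty$; (ii) for fixed $N$, $\tilde L_{N,\chi_q}(f) \overset{d}{\to} \tilde L_N^{\mathrm{rand}}(f)$ as $q \to \infty$; (iii) $\tilde L_N^{\mathrm{rand}}(f) \overset{d}{\to} \zeta_{\mathrm{rand}}(f)$ as $N \to \infty$. Combining the three via a diagonal triangular-array argument then gives $L_q(f) \overset{d}{\to} \zeta_{\mathrm{rand}}(f)$.

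Step (ii) is essentially a moment-matching computation. Since $(\chi_q(n))_{n \leq N}$ is a bounded $\C^N$-valued random vector, convergence in distribution reduces to convergence of all joint moments. By complete multiplicativity \eqref{eq:multiplicativity}, each such joint moment collapses to a single expectation $\E[\chi_q(A)\overline{\chi_q(B)}]$ with $A, B$ integers bounded in terms of $N$ and the moment order; by \eqref{eq:orthogonality relation for Dirichlet characters} this equals $\mathbf{1}[(AB,q)=1]\cdot\mathbf{1}[A\equiv B\pmod{q}]$, which for $q \to \infty$ (with $q$ coprime to $AB$ and $q > AB$) reduces to $\mathbf{1}[A=B] = \E[\omega_A\overline{\omega_B}]$, exactly matching the randomized side.

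For step (iii), the pairwise uncorrelatedness $\E[\omega_n\overline{\omega_m}] = \mathbf{1}[n=m]$ yields
$$\E\abs{\tilde L_M^{\mathrm{rand}}(f) - \tilde L_N^{\mathrm{rand}}(f)}^2 = \sum_{N < n \leq M} n^{-1}\abs{\hat f(\log n)}^2,$$
which tends to zero as $N, M \to \infty$ by the rapid (Paley--Wiener) decay of $\hat f$. Hence $\tilde L_N^{\mathrm{rand}}(f)$ is Cauchy in $L^2$, and its limit is identified with $\zeta_{\mathrm{rand}}(f)$ by comparison with the test-function pairing of the truncated Euler product $\zeta_{N,\mathrm{rand}}$ from \cite[Theorem 1.7]{SaWe20a}; the difference $\tilde L_N^{\mathrm{rand}}(f) - \int f(x)\zeta_{N,\mathrm{rand}}(1/2+ix)\, dx$ is, again by orthogonality, a tail sum over $\{n > N : p | n \Rightarrow p \leq N\}$ that vanishes as $N \to \infty$.

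The principal difficulty lies in step (i). Naively, $\sum_{n > N}\chi_q(n) n^{-1/2}\hat f(\log n)$ is not absolutely convergent on the critical line; nevertheless, by orthogonality its second moment is $\sum_{n > N,(n,q)=1} n^{-1}|\hat f(\log n)|^2$, finite and vanishing uniformly in $q$ as $N \to \infty$. To transfer this tail bound from the formal Dirichlet series to the actual $L$-function, I would introduce a smooth Mellin cutoff $\phi$ with Mellin transform $\tilde\phi$ and write
$$\int_\R f(x) L(1/2+ix,\chi_q)\,dx = \sum_n \chi_q(n)\,n^{-1/2}\phi(n/N)\hat f(\log n) + \frac{1}{2\pi i}\int_{(c)}\tilde\phi(z)\,N^z\!\int_\R f(x) L(1/2+z+ix,\chi_q)\,dx\,dz$$
for some $c > 1/2$, then shift the $z$-contour past the origin (permissible since $L(\cdot,\chi_q)$ is entire for non-principal $\chi_q$; the principal character, which occurs with probability $1/\vp(q)\to 0$, is handled separately via \eqref{eq:principal Lfunction with zeta function}), and bound the resulting contour integral in $L^2$-expectation by a second moment that diagonalizes through orthogonality on the line $\Re z < 0$. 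This is precisely where the Dirichlet series representation pays off over the Euler product one: the singular-integral machinery required in \cite{SaWe20a} is replaced here by direct orthogonality-based diagonalization.
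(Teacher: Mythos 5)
Your three-step architecture (truncate the Dirichlet series, match moments of $(\chi_q(n))_{n\le N}$ against $(\omega_n)_{n\le N}$, then pass from the truncated random Dirichlet polynomial to the truncated Euler product) is exactly the paper's decomposition, and your steps (ii) and (iii) are correct and essentially identical to Propositions \ref{prop:second claim of the proof of the main theorem} and \ref{prop:third claim of the proof of the main theorem}. The problem is step (i), which you correctly identify as the principal difficulty but then resolve with a false claim. You assert that the second moment of the tail $\sum_{n>N}\chi_q(n)n^{-1/2}\hat f(\log n)$ ``diagonalizes through orthogonality'' to $\sum_{n>N,(n,q)=1}n^{-1}|\hat f(\log n)|^2$. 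Orthogonality of Dirichlet characters \ref{eq:orthogonality relation for Dirichlet characters} only forces congruence modulo $q$, not equality: averaging over $\chi\in\Xcal_q$ gives
\begin{equation}
\E\abr{\abs{\sum_{n>N}\frac{\chi_q(n)}{\sqrt{n}}\hat f(\log n)}^2}
=\sum_{\substack{n,l>N\\ (nl,q)=1}}\frac{\1_{n\equiv l\,(\mathrm{mod}\,q)}}{\sqrt{nl}}\,\hat f(\log n)\overline{\hat f(\log l)},
\end{equation}
and since the sum necessarily runs past $q$ (the series for $L(1/2+ix,\chi)$ is infinite), there are infinitely many off-diagonal pairs $n\equiv l\ (\mathrm{mod}\ q)$, $n\neq l$. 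These off-diagonal terms are precisely where all the work lies: in the paper's proof of \Cref{prop:first claim of the proof of the main theorem} they are the terms $S_L$, $\overline{S_L}$, $S_{L,L}$ obtained by writing $n=mq+r$, $l=sq+t$, and controlling them requires the Taylor expansions \ref{eq:Taylor estimate for the squareroot factors}--\ref{eq:Taylor estimate for the "Fourier" factors}, the sum estimates of \Cref{lem:estimates for r and t sums 1}, the rapid decay \ref{eq:log bounds on the Fourier transform of $f$} of $\hat f$ at the points $\log(mq+r)$, and lower bounds on $\vp(q)$. None of this is rendered unnecessary by your Mellin cutoff: after shifting the contour to $\Re z<0$ the second moment of $\int f(x)L(1/2+z+ix,\chi_q)\dx$ over random $\chi_q$ has exactly the same off-diagonal structure, so the claimed diagonalization fails there too. (There is also a sign that you conflated character orthogonality with the orthogonality $\E[\omega_n\overline{\omega_m}]=\delta_{n,m}$ of the limiting variables, which does hold but only in the limit $q\to\infty$ for fixed indices.)

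Two smaller points. First, the conditional (non-absolute) convergence of the series on the critical line is not an obstacle requiring Mellin smoothing: for $\chi\neq\chi_0$ the series converges uniformly on compacts in $\Re(s)>1/2>0$, so one may interchange sum and integral directly (\Cref{lem:Fourier representation of Lf}), and the resulting expectation is computed exactly, not approximately. Second, your treatment of the principal character (probability $1/\vp(q)\to0$, handled via \ref{eq:principal Lfunction with zeta function}) is in the right spirit, but note that $|L_q^{(0)}(f)|^2$ grows with $q$ through the factor $\prod_{p\mid q}(1+p^{-1/2})^2\le 2^{2\omega(q)}$, so one still needs the bound $\omega(q)=\bigO(\log q/\log\log q)$ to conclude that $|L_q^{(0)}(f)|^2/\vp(q)\to0$; this is not automatic.
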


In the proof, we will utilize the fact established in \cite{SaWe20a} that the truncated randomized Euler product integrated against a test function $f$
\begin{equation}
\label{eq:def of randomized truncated Euler product}
\zeta_{N,\mathrm{rand}}(f):=\int_\R f(x)\prod_{p\leq N}(1-p^{-(\half+ix)}\omega_p)^{-1}\dx,
\end{equation}
converges almost surely to $\zeta_{\mathrm{rand}}(f)$. In fact, we only require convergence in law. Above $N\in\N$, the random variables $\omega_p$ are i.i.d for the sequence of primes $\{p\}$ and uniformly distributed on the unit circle $\T\subset \C$. 
We will show that for $M(f)\equiv M\in\N$ large enough
\begin{equation}
\label{eq:def of LcalM}
\Lcal_{M,q}(f):= \int_\R f(x)\sum_{n=1}^M\frac{\chi_q(n)}{n^{\half+ix}}\dx
\end{equation}
approximates in law the quantity 
\begin{equation}
\label{eq:def of lM}
\Lcal_{M,\omega}(f):=\int_\R f(x)\sum_{n=1}^M\frac{\omega_n}{n^{\half+ix}}\dx
\end{equation}
and that this in turn approximates well $\zeta_{N,\mathrm{rand}}(f)$. The random variables $\omega_n$ for non-prime integers will be introduced in the next section. Finally, we use the approximation result given in \Cref{thm:approximation}.

\begin{remark}
Since the convergence is only in law we cannot directly infer convergence in the topology of the space of ordinary distributions $\Dcal'(\R)$, that is, the topological dual of $\Dcal(\R)=C_c^\infty(\R)$. It is natural to expect convergence in some distribution space e.g. the Sobolev space $H^{-\alpha}(\R)$, for some $\alpha>0$. However, since we already know by \cite[Theorem 1.1]{SaWe20a}, that the limiting object $\zeta_{\mathrm{rand}}$ is in a weighted Sobolev space $H^{-\alpha}$ for all $\alpha>1/2$, we do not pursue this further.
\end{remark}
After we have proven the main result, as an application of our methods, we will prove analogous result in the space of analytic functions $H(D)$ on the half-plane $D=\{s\in\C\mid \Re(s)>1/2\}$. In this case, we must either exclude the event $\{\chi_q=\chi_0\}$, that is, the principal character or exclude the point $s=1$, where $L(s,\chi_0)$ has a first order pole. We choose to exclude the principal character since the probability $\P(\chi_q=\chi_0)=1/\vp(q)$ tends to zero as $q\to\infty$ so this event becomes negligible in the limit $q\to \infty$. Furthermore, the limiting object is in any case analytic in all of $D$.  
\end{subsection}
\end{section}

\section*{Acknowledgements}
The author is grateful to Eero Saksman and Christian Webb for insightful comments on earlier versions of this article. This work was supported by the Research Council of Finland grant 357738.

\begin{section}
{Some preliminary lemmas}
\label{sec:some preliminary results}
Let us now consider the key property of $\chi_q$ relevant to our analysis. First we have
\begin{lemma}
\label{lem:Convergence of the random characters evaluated at distinct primes}
For arbitrary $N\in\N$, let $\{p_i\}_{i=1,2,\dots,N}$ be a set of distinct prime numbers. Then
\begin{equation}
(\chi_q(p_1),\chi_q(p_2),\dots,\chi_q(p_N))\overset{d}{\to} (\omega_{p_1},\omega_{p_2},\dots,\omega_{p_N})
\end{equation}
as $q\to \infty$. The variables $\omega_{p_i}$ are $i.i.d$ with $\omega_{p_i}\sim \Unif(\T)$, that is, $\omega_{p_i}$ is uniformly distributed on the unit circle in the complex plane.
\end{lemma}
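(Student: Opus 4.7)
The plan is to establish convergence in distribution on the compact space $\T^N$ via the moment (character) method. Since $\T^N$ is compact and the family of monomials $\{(z_1,\dots,z_N)\mapsto z_1^{k_1}\cdots z_N^{k_N}\colon (k_1,\dots,k_N)\in\Z^N\}$ is dense in $C(\T^N)$ by Stone--Weierstrass, it suffices to show
\begin{equation*}
\E\Big[\prod_{i=1}^{N}\chi_q(p_i)^{k_i}\Big]\;\longrightarrow\;\E\Big[\prod_{i=1}^{N}\omega_{p_i}^{k_i}\Big]\qquad\text{as }q\to\infty
\end{equation*}
for every $(k_1,\dots,k_N)\in\Z^N$. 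On the limit side, independence and the rotational invariance of $\Unif(\T)$ immediately give $\E\big[\prod_i\omega_{p_i}^{k_i}\big]=\prod_i\E[\omega_{p_i}^{k_i}]=\1_{k_1=\cdots=k_N=0}$, so the problem reduces to verifying the same identity for the prelimit moments.

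To compute these I would restrict to $q$ coprime to $p_1\cdots p_N$, so that each $\chi_q(p_i)\in\T$ and in particular $\chi_q(p_i)^{-1}=\overline{\chi_q(p_i)}$. Splitting each exponent $k_i$ into its positive and negative parts and invoking complete multiplicativity~\eqref{eq:multiplicativity} gives
\begin{equation*}
\prod_{i=1}^{N}\chi_q(p_i)^{k_i}=\chi_q(m)\,\overline{\chi_q(n)},\qquad m:=\prod_{k_i>0}p_i^{k_i},\quad n:=\prod_{k_i<0}p_i^{-k_i}.
\end{equation*}
Averaging over the uniform choice $\chi_q\in\Xcal_q$ and applying the orthogonality relation~\eqref{eq:orthogonality relation for Dirichlet characters} then yields
\begin{equation*}
\E\big[\chi_q(m)\overline{\chi_q(n)}\big]=\frac{1}{\vp(q)}\sum_{\chi\in\Xcal_q}\chi(m)\overline{\chi(n)}=\1_{m\equiv n\mod q}\cdot \1_{(mn,q)=1}.
\end{equation*}

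For $q>\max(m,n)$ the congruence $m\equiv n\mod q$ collapses to the equality $m=n$; since the supports of $m$ and $n$ live on disjoint sets of primes, unique factorization then forces $k_i=0$ for every $i$. Hence for all sufficiently large $q$ with $(q,p_1\cdots p_N)=1$ the prelimit moment equals $\1_{k_1=\cdots=k_N=0}$, matching the limit side, and Stone--Weierstrass closes the convergence argument.

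The only point that requires genuine care -- and the obstacle I would flag -- is the coprimality issue: if $p_i\mid q$ then $\chi_q(p_i)\equiv 0$, so the statement as written cannot hold along sequences $q_k\to\infty$ that always contain some fixed $p_i$ as a factor. The convergence should therefore be read along $q\to\infty$ restricted to the density-one set $\{q:(q,p_1\cdots p_N)=1\}$, a restriction that is harmless in the application to \Cref{thm:main theorem}. Beyond this mild subtlety the computational core of the proof is entirely encoded in the moment formula above.
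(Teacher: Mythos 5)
Your argument is essentially the paper's own proof: both reduce the claim to convergence of the mixed moments via boundedness/Stone--Weierstrass on $\T^N$, evaluate the prelimit moments with complete multiplicativity and the orthogonality relation \eqref{eq:orthogonality relation for Dirichlet characters}, and conclude by noting that for $q$ larger than the fixed products the congruence modulo $q$ forces equality, which unique factorization rules out unless all exponents vanish. (The paper writes the test monomials as $\prod_i z_i^{k_i}\bar z_i^{m_i}$ with $k_i,m_i\in\N$ rather than $\prod_i z_i^{k_i}$ with $k_i\in\Z$, which is the same family on $\T^N$.)

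The coprimality caveat you flag is genuine and is glossed over in the paper: if $p_i\mid q$ then $\chi_q(p_i)=0$ for \emph{every} character modulo $q$, so e.g.\ $\E[|\chi_q(p_i)|^2]=\1_{p_i\nmid q}$ oscillates and the stated convergence fails along the subsequence $q\in p_i\N$; correspondingly, the orthogonality relation as used in the paper's computation silently assumes $(p_1\cdots p_N,q)=1$. So the lemma should indeed be read along $q$ coprime to $p_1\cdots p_N$ (or one must otherwise account for the $q$ with small prime factors in the downstream propositions). One small correction: that set of $q$ has natural density $\prod_i(1-1/p_i)<1$, not density one.
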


\begin{proof}
Since the random variables involved are bounded, it suffices to consider their moments. knowing the moments allows us to compute the integrals for all polynomials and by a density argument, also those of all continuous functions. Therefore, the measure is uniquely determined. Let us first consider for fixed $k_i,m_i\in \N$, $i=1,2,\dots,N$
\begin{equation}
\begin{split}
\label{eq:moments of omegap}
\E[\omega_{p_1}^{k_1}\overline{\omega_{p_1}}^{m_1}\omega_{p_2}^{k_2}\overline{\omega_{p_2}}^{m_2}\dots\omega_{p_N}^{k_N}\overline{\omega_{p_N}  }^{m_N}]&=\frac{1}{(2\pi)^N}\prod_{j=1}^N\int_0^{2\pi}e^{i(k_j-m_j)\theta_j}\d\theta_j
\\
&=
\begin{cases}
1, &\text{if $k_i=m_i$ for all $i=1,2,\dots,N$}
\\
0, &\text{otherwise}.
\end{cases}
\end{split}
\end{equation}
On the other hand, we have
\begin{equation}
\label{eq:moments of chi_q evaluated at primes}
\begin{split}
\E&\abr{[\chi_q(p_1)]^{k_1}\overline{\chi_q(p_1)}^{m_1}[\chi_q(p_2)]^{k_2}\overline{\chi_q(p_2)}^{m_2}\dots[\chi_q(p_N)]^{k_N}\overline{\chi_q(p_N)}^{m_N}}
\\
&=\frac{1}{\vp(q)}\sum_{j=0}^{\vp(q)-1}[\chi^{(j)}(p_1)]^{k_1}\overline{\chi_q(p_1)}^{m_1}[\chi^{(j)}(p_2)]^{k_2}\overline{\chi(p_2)}^{m_2}\dots[\chi^{(j)}(p_N)]^{k_N}\overline{\chi_q(p_N)}^{m_N}
\\
&=\frac{1}{\vp(q)}\sum_{j=0}^{\vp(q)-1}\chi^{(j)}(p_1^{k_1}p_2^{k_2}\dots p_n^{k_n})\overline{\chi^{(j)}(p_1^{m_1}p_2^{m_2}\dots p_N^{m_N})}
\\
&=
\begin{cases}
1, \text{ if $p_1^{k_1}p_2^{k_2}\dots p_{N}^{k_N}\equiv p_1^{m_1}p_2^{m_2}\dots p_{N}^{m_N}$ (mod $q$)}
\\
0, \text{ otherwise}.
\end{cases}
\end{split}
\end{equation}
Here $\chi^{(j)}$, $j=0,1,\dots,\vp(q)-1$ are the Dirichlet characters modulo $q$. We have also used the orthogonality relation of Dirichlet characters \Cref{eq:orthogonality relation for Dirichlet characters} and their multiplicative property \Cref{eq:multiplicativity}. Note that above the indexing is with superscripts rather than subscripts to distinguish from $\chi_q$. Since the exponents are fixed, the RHS of \Cref{eq:moments of chi_q evaluated at primes} converges to $\1_{\{k_i=m_i,\text{ for all } i\}}$ as $q\to \infty$ by the uniqueness of prime factorization. This establishes the convergence of the moments.
\end{proof}

Let us now define, for non-prime positive integers $n=\prod_{i=1}^kp_i^{\alpha_i}$ the random variables
\begin{equation}
\label{eq:def of omegan for nonprime n}
\omega_n:=\prod_{i=1}^k\omega_{p_i}^{\alpha_i}.
\end{equation}
Then we have:
\begin{lemma} 
\label{lem:convergence of characters for general integers}
For $N,n_1,n_2,\dots,n_N\in\N$ we have
\begin{equation}
(\chi_q(n_1),\chi_q(n_2),\dots,\chi_q(n_N))\overset{d}{\to}(\omega_{n_1},\omega_{n_2},\dots,\omega_{n_N}),
\end{equation}
as $q\to \infty$.

Furthermore, the collection $\{\omega_i\}_{i\in\N}$ satisfies the following moment structure. For $k_i,m_i\in\N$ and $i=1,2,\dots,N$ we have 
\begin{equation}
\E[\omega_{n_1}^{k_1}\overline{\omega_{n_1}}^{m_1}\omega_{n_2}^{k_2}\overline{\omega_{n_2}}^{m_2}\dots\omega_{n_N}^{k_N}\overline{\omega_{n_N}}^{m_N}]=
\begin{cases}
1, &\text{if the exponents of distinct primes in the}
\\
&\text{prime factorization of $n_1^{k_1}n_2^{k_2}\dots n_N^{k_N}$ and}
\\ 
&\text{$n_1^{m_1}n_2^{m_2}\dots n_N^{m_N}$ match.}
\\
0, &\text{otherwise.}
\end{cases}
\end{equation}
In particular, $\omega_{n}$ and $\omega_{m}$ are orthogonal, that is,
\begin{equation}
\E[\omega_n\overline{\omega_m}]=\delta_{n,m}.
\end{equation}
\end{lemma}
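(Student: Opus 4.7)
The two assertions split naturally: the convergence in distribution and the explicit moment formula. Both are essentially bookkeeping on top of \Cref{lem:Convergence of the random characters evaluated at distinct primes} together with the complete multiplicativity \ref{eq:multiplicativity} of Dirichlet characters.

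\textbf{Step 1 (Reduction to primes via multiplicativity).} I would first write each $n_i$ in its prime factorization. Pick a finite set of primes $p_1,\ldots,p_K$ containing every prime that divides any of $n_1,\ldots,n_N$, and write $n_i=\prod_{j=1}^K p_j^{\alpha_{i,j}}$ with $\alpha_{i,j}\in\Nz$. By the complete multiplicativity \ref{eq:multiplicativity} of $\chi_q$,
\begin{equation*}
\chi_q(n_i)=\prod_{j=1}^K \chi_q(p_j)^{\alpha_{i,j}},\qquad i=1,\ldots,N.
\end{equation*}
The map $\Phi\colon \T^K\to \T^N$ sending $(z_1,\ldots,z_K)$ to $\bigl(\prod_j z_j^{\alpha_{i,j}}\bigr)_{i=1}^N$ is continuous. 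Applying \Cref{lem:Convergence of the random characters evaluated at distinct primes} to the primes $p_1,\ldots,p_K$ and invoking the continuous mapping theorem yields
\begin{equation*}
\bigl(\chi_q(n_1),\ldots,\chi_q(n_N)\bigr)=\Phi\bigl(\chi_q(p_1),\ldots,\chi_q(p_K)\bigr)\overset{d}{\to}\Phi\bigl(\omega_{p_1},\ldots,\omega_{p_K}\bigr),
\end{equation*}
and the right side equals $(\omega_{n_1},\ldots,\omega_{n_N})$ by the defining relation \ref{eq:def of omegan for nonprime n}.

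\textbf{Step 2 (Moment formula via independence).} For the second assertion, expand each $\omega_{n_i}$ through \ref{eq:def of omegan for nonprime n}. The monomial $\omega_{n_1}^{k_1}\cdots\omega_{n_N}^{k_N}\overline{\omega_{n_1}}^{m_1}\cdots\overline{\omega_{n_N}}^{m_N}$ then becomes $\prod_{p\text{ prime}}\omega_p^{a(p)}\overline{\omega_p}^{b(p)}$, where $a(p)$ and $b(p)$ are the exponents of $p$ in the prime factorizations of $n_1^{k_1}\cdots n_N^{k_N}$ and $n_1^{m_1}\cdots n_N^{m_N}$ respectively. Only finitely many $a(p),b(p)$ are nonzero. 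By the independence of $\{\omega_p\}_{p\text{ prime}}$ the expectation factorizes, and by the single-prime computation \ref{eq:moments of omegap} each factor equals $\1_{\{a(p)=b(p)\}}$. Hence the full expectation is $1$ iff $a(p)=b(p)$ for every prime $p$, which is exactly the matching condition in the statement, and $0$ otherwise.

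\textbf{Step 3 (Orthogonality).} Specializing to $N=2$, $k_1=m_2=1$, $k_2=m_1=0$ (so we are computing $\E[\omega_n\overline{\omega_m}]$) the matching condition reduces to: for every prime $p$, its exponent in $n$ equals its exponent in $m$. By uniqueness of prime factorization this is equivalent to $n=m$, giving $\E[\omega_n\overline{\omega_m}]=\delta_{n,m}$.

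There is no genuine obstacle here; the only care required is the bookkeeping around the exponents $a(p),b(p)$ and checking that the continuous mapping theorem applies (which is immediate since $\Phi$ is a polynomial map into the compact torus $\T^N$).
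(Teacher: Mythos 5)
Your proposal is correct and follows essentially the same route as the paper, which proves the convergence by combining the complete multiplicativity \ref{eq:multiplicativity} with \Cref{lem:Convergence of the random characters evaluated at distinct primes} and the continuous mapping theorem, and reads off the moment structure from the single-prime computation \ref{eq:moments of omegap}. Your write-up merely makes explicit the bookkeeping (the map $\Phi$ and the exponents $a(p),b(p)$) that the paper leaves implicit.
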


\begin{proof}
Convergence follows from the multiplicative property of Dirichlet characters \Cref{eq:multiplicativity} and the continuous function theorem; see, for example, \cite[Theorem 4.27]{Ka02a}. The moment structure and orthogonality are evident from the computation in \Cref{eq:moments of omegap}.
\end{proof}

Next, we present the Fourier representation of $L_q(f)$ defined in \Cref{thm:main theorem}. 
\begin{lemma}
\label{lem:Fourier representation of Lf}
For $\chi_q\neq\chi_0$ and $f\in C_c^\infty(\R)$ we have
\begin{equation}
L_q(f)=\sum_{n=1}^\infty \frac{\chi_q(n)}{n^\half}\hat{f}\rbr{\frac{1}{2\pi}\log(n)}=\lim_{N\to\infty}\sum_{n=1}^N\frac{\chi_q(n)}{n^\half}\hat{f}\rbr{\frac{1}{2\pi}\log(n)},
\end{equation}
where
\begin{equation}
\hat{f}(k)=\int_{\R}f(x)e^{-i2\pi xk}\dx
\end{equation}
is the Fourier transform of $f$.
\end{lemma}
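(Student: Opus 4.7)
The plan is to exploit that for a non-principal character $\chi_q \neq \chi_0$, the Dirichlet series $\sum_n \chi_q(n)/n^s$ converges uniformly on compact subsets of the half-plane $\{\Re(s)>0\}$, as stated in the preliminary material; this is the key fact that lets us avoid worrying about absolute convergence on the critical line.

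First I would fix $f \in C_c^\infty(\R)$ and let $A>0$ be such that $\supp(f) \subset [-A,A]$. The set $K:=\{1/2+ix : x\in[-A,A]\}$ is a compact subset of $\{\Re(s)>0\}$, so the partial sums
\begin{equation*}
S_N(x):=\sum_{n=1}^N\frac{\chi_q(n)}{n^{\half+ix}}
\end{equation*}
converge uniformly in $x\in[-A,A]$ to $L(1/2+ix,\chi_q)$. Since $f$ is bounded and compactly supported, uniform convergence on $\supp(f)$ justifies interchanging the limit and the integral:
\begin{equation*}
L_q(f)=\int_{\R} f(x)\lim_{N\to\infty}S_N(x)\dx=\lim_{N\to\infty}\int_{\R}f(x)S_N(x)\dx.
\end{equation*}

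Next I would compute each partial sum term-by-term. Because the sum inside the integral is finite, linearity gives
\begin{equation*}
\int_{\R}f(x)S_N(x)\dx=\sum_{n=1}^N\frac{\chi_q(n)}{n^{\half}}\int_{\R}f(x)e^{-ix\log n}\dx=\sum_{n=1}^N\frac{\chi_q(n)}{n^{\half}}\hat{f}\rbr{\frac{\log n}{2\pi}},
\end{equation*}
where in the last step I used the defining relation of the Fourier transform with the factor $2\pi$ absorbed into the argument: $\int_{\R}f(x)e^{-i2\pi x k}\dx=\hat{f}(k)$ evaluated at $k=\log(n)/(2\pi)$. Combining with the previous display yields the second equality in the statement, and the convergence of the right-hand side also establishes the first equality as the infinite series is by definition the limit of its partial sums.

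I do not expect any genuine obstacle here: the only subtlety is the conditional (not absolute) convergence on the critical line for non-principal characters, which prevents a direct Fubini-style swap of the sum and the integral, but it is circumvented entirely by invoking the classical uniform-convergence-on-compacta result together with the compact support of $f$. No quantitative decay of $\hat f$ is needed, though of course the Schwartz decay of $\hat f$ makes the resulting series absolutely convergent in this case as well, which will be useful downstream when comparing to $\Lcal_{M,q}(f)$ and $\Lcal_{M,\omega}(f)$.
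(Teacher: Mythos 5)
Your proposal is correct and follows the same route as the paper: both invoke uniform convergence of the Dirichlet series on compact subsets of $\Re(s)>0$ for non-principal characters, use the compact support of $f$ to interchange the limit with the integral, and then identify each term as a Fourier transform of $f$ evaluated at $\log(n)/(2\pi)$. Your write-up is in fact slightly more explicit than the paper's about why the interchange is justified.
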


\begin{proof}
Recall that the series defining the $L$-function $L(s,\chi)$, for non-principal characters $\chi$ converges uniformly on compact subsets of the half-plane $\Re(s)>0$. Thus, since the support of $f$ is compact, we simply compute
\begin{equation}
\begin{split}
L_q(f)&=\int_{\R}f(x)\sum_{n=1}^\infty\frac{\chi_q(n)}{n^{\half+ix}}\dx
=\sum_{n=1}^\infty\frac{\chi_q(n)}{n^{\half}}\int_{\R}f(x)e^{-ix\log(n)}\dx=\sum_{n=1}^\infty\frac{\chi_q(n)}{n^\half}\hat{f}\rbr{\frac{1}{2\pi}\log(n)}.
\end{split}
\end{equation}
This completes the proof.
\end{proof}

By the smoothness of $f$, we have for $n\in\N$
\begin{equation}
\label{eq:log bounds on the Fourier transform of $f$}
\abs{\hat{f}\rbr{\frac{1}{2\pi}\log(n)}}\leq
\begin{cases}
C_k(\log(n))^{-k}, \,\,\,\,\text{ if $n\geq 2$}
\\
|\hat{f}(0)|, \qquad\qquad \text{ if $n=1$}
\end{cases}
\end{equation}
for all $k\in \N$ with some constants $C_k>0$. Furthermore, for later use, we note that we can always also use the cruder estimate
\begin{equation}
\label{eq:constant bound on the Fourier transform of $f$}
\abs{\hat{f}\rbr{\frac{1}{2\pi}\log(n)}}\leq C
\end{equation}
for some constant $C>0$ and all $n\in\N$.
In addition, recall that the series $\sum_{n=2}^\infty1/(n(\log(n))^a)$ converges for any $a>1$. Together, these two facts yield the following useful fact, which we will need later
\begin{equation}
\label{eq:convergence of a series of Fourier transforms of f}
\sum_{n=1}^\infty\frac{|\hat{f}((1/2\pi)\log(n))|^2}{n}<\infty.
\end{equation}

We also require the following summations estimates.  
\begin{lemma}
\label{lem:estimates for r and t sums 1}
For any function $f\colon \N\to \C$ and $m,s\in\Z_{\geq 0}$, we have
\begin{equation}
\sum_{r,t=1}^{q-1}\1_{(mq+r,q)=1}\1_{(sq+t,q)=1}\rbr{\delta_{r,t}-\frac{1}{\vp(q)}}f(r)=0,
\end{equation}
where $\vp$ is the Euler totient function. Note that this includes the case that $f(r)$ is constant. 

Furthermore, for any integers $a,b\geq 0$ and $\sigma\geq 1/2$, we have
\begin{equation}
\frac{1}{q^{2\sigma}}\sum_{r,t=1}^{q-1}\1_{(mq+r,q)=1}\1_{(sq+t,q)=1}\abs{\rbr{\delta_{r,t}-\frac{1}{\vp(q)}}\rbr{\frac{r}{q}}^a\rbr{\frac{t}{q}}^b}=\bigO(q^{-2(\sigma-\half)}),
\end{equation}
where the implied constant is independent of $q$.
Lastly, for any integers $a,b\geq 0$ and $\sigma\geq 1/2$, we have
\begin{equation}
\frac{1}{\vp(q)q^{\sigma}}\sum_{r,t=1}^{q-1}\1_{(mq+r,q)=1}\1_{(sq+t,q)=1}\frac{1}{r^{\sigma}}\rbr{\frac{r}{q}}^a\rbr{\frac{t}{q}}^b=\bigO(q^{-(\sigma-\half)}),
\end{equation}
where the implied constant is independent of $q$ and $x$.
\end{lemma}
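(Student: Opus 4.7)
The crucial first observation is that $mq+r\equiv r\,(\mathrm{mod}\,q)$, so $(mq+r,q)=(r,q)$ and likewise $(sq+t,q)=(t,q)$; the two indicator factors therefore collapse to $\1_{(r,q)=1}\1_{(t,q)=1}$, and none of the three sums actually depends on $m$ or $s$. This reduction turns all three claims into statements about elementary sums over the reduced residue classes modulo $q$, which use only the defining identity $\sum_{r=1}^{q-1}\1_{(r,q)=1}=\vp(q)$ (valid for $q\geq 2$; the $q=1$ case is empty and trivial).

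For the first identity I would split the bracket linearly as $\delta_{r,t}-\vp(q)^{-1}$. The $\delta_{r,t}$-piece collapses the double sum to $\sum_{r=1}^{q-1}\1_{(r,q)=1}f(r)$, while the constant piece factorizes as $-\vp(q)^{-1}\bigl(\sum_{r=1}^{q-1}\1_{(r,q)=1}f(r)\bigr)\bigl(\sum_{t=1}^{q-1}\1_{(t,q)=1}\bigr)$; since the second factor equals $\vp(q)$, the two contributions cancel exactly.

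For the absolute-value bounds in the second and third identities I would split the double sum into its diagonal ($r=t$) and off-diagonal ($r\neq t$) parts. On the diagonal $|\delta_{r,t}-\vp(q)^{-1}|\leq 1$, so the diagonal contribution to the second identity is at most $q^{-1}\sum_{r=1}^{q-1}(r/q)^{a+b}$, a Riemann-sum comparison with $\int_0^1 x^{a+b}\dx$, and the diagonal contribution to the third is at most $q^{-1/2-a}\sum_{r=1}^{q-1}r^{a-1/2}$, again a scaled Riemann sum; both are $\bigO(1)$. Off the diagonal $|\delta_{r,t}-\vp(q)^{-1}|=\vp(q)^{-1}$, and the $r,t$ sums decouple: for the second identity I would bound $\sum_{r=1}^{q-1}\1_{(r,q)=1}(r/q)^a\leq \vp(q)$ and similarly in $t$, obtaining at most $\vp(q)^2/(q\vp(q))=\vp(q)/q\leq 1$; for the third I would combine $\sum_{r=1}^{q-1}r^{-1/2}=\bigO(\sqrt q)$ with $\sum_{t=1}^{q-1}\1_{(t,q)=1}(t/q)^a\leq\vp(q)$, yielding $\sqrt q\cdot\vp(q)/(q^{1/2}\vp(q))=\bigO(1)$. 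Note that $x$ enters only through $r^{-\half-ix}$, whose modulus is $r^{-1/2}$, so uniformity in $x$ is automatic.

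No step here is genuinely delicate; the only small care needed is to verify that the normalizing $q$-powers in front compensate the sizes of the Riemann sums. The key simplification is the initial observation $(mq+r,q)=(r,q)$, which removes the apparent $m,s$-dependence and makes all the elementary bounds visible.
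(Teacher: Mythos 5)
Your proposal is correct and follows essentially the same route as the paper: reduce $\1_{(mq+r,q)=1}$ to $\1_{(r,q)=1}$, obtain the first identity by exact cancellation against $\sum_t\1_{(t,q)=1}=\vp(q)$, and bound the other two sums by separating the $\delta_{r,t}$ contribution from the $\vp(q)^{-1}$ contribution and using $\sum_{r=1}^{q-1}r^{-1/2}=\bigO(\sqrt q)$ together with $\vp(q)\leq q$. Your diagonal/off-diagonal split is just a cosmetically different packaging of the paper's triangle-inequality estimate, and all the resulting bounds agree.
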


\begin{remark}
In both the first and last statements, the roles of $r$ and $t$ may evidently be interchanged. Furthermore, note that $\sigma=\Re(s)$ as in \Cref{sec:Convergence in the space of analytic functions} and the claim is formulated to apply directly both on the critical line and in the half plane to its right.  
\end{remark}

\begin{proof}
Since we clearly have $(q,r)=1$ if and only if $(mq+r,q)=1$, for any $m\in\N$ and $r\leq q-1$,  we will henceforth replace $\1_{(mq+r,q)=1}$ with $\1_{(r,q)=1}$ throughout. The first statement is a simple computation
\begin{equation}
\begin{split}
\sum_{r,t=1}^{q-1}\1_{(r,q)=1}\1_{(t,q)=1}\rbr{\delta_{r,t}-\frac{1}{\vp(q)}}f(r)&=\sum_{r=1}^{q-1}\1_{(r,q)=1}f(r)
-\underbrace{\rbr{\frac{1}{\vp(q)}\sum_{t=1}^{q-1}\1_{(t,q)=1}}}_{=1}\sum_{r=1}^{q-1}\1_{(r,q)=1}f(r)=0.
\end{split}
\end{equation}

To prove the second statement, we bound each term separately. Let us write
\begin{equation}
\begin{split}
\frac{1}{q^{2\sigma}}\sum_{r,t=1}^{q-1}&\1_{(r,q)=1}\1_{(t,q)=1}\abs{\rbr{\delta_{r,t}-\frac{1}{\vp(q)}}\rbr{\frac{r}{q}}^a\rbr{\frac{t}{q}}^b}
\\
&\leq\frac{1}{q^{a+b+2\sigma}}\sum_{r=1}^{q-1}\1_{(r,q)=1}r^{a+b}+\frac{1}{q^{a+b+2\sigma}\vp(q)}\sum_{r=1}^{q-1}\1_{(r,q)=1}r^a\sum_{t=1}^{q-1}\1_{(t,q)=1}t^b
\end{split}
\end{equation}
Each sum above contains $\vp(q)$ terms and each term $x^c$, where $x=r,t$, and $c=a,b,a+b$, is bounded by $q^c$. Thus, we obtain an upper bound $\bigO(\vp(q)/q^{2\sigma})=\bigO(q^{-2(\sigma-\half)})$ for both terms. Note that $\vp(q)/q\leq 1-1/q\leq 1$ for all $q$, with equality on the left only when $q$ is a prime. 

For the last claim we have
\begin{equation}
\begin{split}
\frac{1}{q^\sigma}&\sum_{r,t=1}^{q-1}\1_{(r,q)=1}\1_{(t,q)=1}\frac{1}{r^\sigma} \rbr{\frac{r}{q}}^a\rbr{\frac{t}{q}}^b
=\rbr{\frac{1}{q^b\vp(q)}\sum_{t=1}^{q-1}\1_{(t,q)=1} t^b}\rbr{\frac{1}{q^{\sigma+a}}\sum_{r=1}^{q-1}\1_{(r,q)=1}r^{a-\sigma}}.
\end{split}
\end{equation}

The first factor is $\bigO(1)$ uniformly in $q$ since the sum again has $\vp(q)$ terms, which are bounded by $q^a$. Next, consider the remaining sum. Here we may replace the indicator with $1$ to obtain an upper bound
\begin{equation}
\begin{split}
\label{eq:sum for 1/sqrt r}
\frac{1}{q^{\sigma+a}}\sum_{r=1}^{q-1}r^{a-\sigma}\leq\frac{1}{q^\sigma}\sum_{r=1}^{q-1}\frac{1}{\sqrt{r}}\leq\frac{1}{q^\sigma}\int_0^{q-1}\frac{\dx}{\sqrt{x}}=q^{-(\sigma-\half)}\frac{2\sqrt{q-1}}{\sqrt{q}}
=\bigO(q^{-(\sigma-\half)}),
\end{split}
\end{equation} 
where the first inequality is based on the observations $r^a\leq q^a$ and $r^{-(\sigma-\half)}\leq 1$ for all $r=1,2,\dots,q-1$.
\end{proof}
\end{section}

\begin{section}{Proof of \Cref{thm:main theorem}}
\label{sec:proof of the main theorem}
The proof is divided into three parts, which are expressed as the three propositions below. Given these propositions, and the fact that we have 
\begin{equation}
\label{eq:convergence in distribution of the randomized zeta function}
\zeta_{N,\mathrm{rand}}(f)\overset{d}{\to}\zeta_{\mathrm{rand}}(f)
\end{equation}
as mentioned in \Cref{sec:The main result}, the result follows by applying twice \Cref{thm:approximation} below with $(X,d)=(\R,|\cdot|)$. 

\begin{theorem}[{{\cite[Theorem 4.28]{Ka02a}}}]
\label{thm:approximation}
Let $\xi$, $\xi_q$, $\eta^M$ and $\eta_q^M$ with $q,M\in\N$, be random elements in a metric space $(X,d)$ such that $\eta_q^M\overset{d}{\to}\eta^M$ as $q\to \infty$ and $\eta^M\overset{d}{\to} \xi$ as $M\to\infty$. If, in addition, the following 
\begin{equation}
\label{eq:extra condition}
\lim_{M\to \infty}\limsup_{q\to\infty}\E\abr{d(\eta_q^M,\xi_q)\wedge 1}=0
\end{equation}
holds, then $\xi_q\overset{d}{\to} \xi$ as $q\to\infty$.
\end{theorem}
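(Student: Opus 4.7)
The plan is to reduce convergence in distribution to convergence of bounded Lipschitz expectations: on a metric space, $\xi_q\overset{d}{\to}\xi$ is equivalent to $\E[f(\xi_q)]\to\E[f(\xi)]$ for every bounded Lipschitz $f\colon X\to\R$ (one direction of the Portmanteau theorem). I fix such an $f$ with $\norm{f}_\infty\le C$ and Lipschitz constant $L$, and aim to show $\E[f(\xi_q)]\to\E[f(\xi)]$ as $q\to\infty$. The whole argument then reduces to a triangle-inequality split together with one elementary pointwise bound on $|f(x)-f(y)|$.

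The essential ingredient is the bound
\begin{equation*}
|f(x)-f(y)|\le (L+2C)\bigl(d(x,y)\wedge 1\bigr),
\end{equation*}
obtained by treating the cases $d(x,y)\le 1$ (use Lipschitz) and $d(x,y)>1$ (use $|f(x)-f(y)|\le 2C$) separately. Writing $K:=L+2C$, this gives $|\E[f(\xi_q)]-\E[f(\eta_q^M)]|\le K\,\E[d(\xi_q,\eta_q^M)\wedge 1]$ for every $M,q$, and combined with a triangle inequality produces
\begin{equation*}
|\E[f(\xi_q)]-\E[f(\xi)]|\le K\,\E[d(\xi_q,\eta_q^M)\wedge 1]+|\E[f(\eta_q^M)]-\E[f(\eta^M)]|+|\E[f(\eta^M)]-\E[f(\xi)]|.
\end{equation*}

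Given $\varepsilon>0$, I would first pick $M$ large: the hypothesis $\eta^M\overset{d}{\to}\xi$ lets me enforce $|\E[f(\eta^M)]-\E[f(\xi)]|<\varepsilon/3$, and condition \eqref{eq:extra condition} lets me arrange $\limsup_{q\to\infty}K\,\E[d(\xi_q,\eta_q^M)\wedge 1]<\varepsilon/3$. With this $M$ fixed, the hypothesis $\eta_q^M\overset{d}{\to}\eta^M$ makes the middle term at most $\varepsilon/3$ for all $q$ large enough. Taking $\limsup_{q\to\infty}$ of both sides of the inequality then yields $\limsup_{q\to\infty}|\E[f(\xi_q)]-\E[f(\xi)]|\le\varepsilon$, and sending $\varepsilon\downarrow 0$ completes the proof. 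There is no real obstacle here — this is a standard approximation device — the only structural subtlety, and precisely the reason for the $\lim_M\limsup_q$ ordering in \eqref{eq:extra condition} rather than a genuine double limit, is that $M$ must be chosen first and held fixed while $q\to\infty$, so that the $q$-asymptotics for the pairs $(\eta_q^M,\eta^M)$ and $(\xi_q,\eta_q^M)$ can be exploited simultaneously.
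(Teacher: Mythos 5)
Your proof is correct. The paper itself offers no proof of this statement---it is quoted verbatim from Kallenberg \cite[Theorem 4.28]{Ka02a}---and your argument (reduce to bounded Lipschitz test functions via Portmanteau, use the pointwise bound $|f(x)-f(y)|\le (L+2C)\,(d(x,y)\wedge 1)$, split by the triangle inequality, and choose $M$ first before letting $q\to\infty$) is essentially the standard proof given in that reference, so there is nothing to add.
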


We now state the three propositions to be proved.
\begin{proposition}
\label{prop:first claim of the proof of the main theorem}
Let $\chi_q\sim\Unif(\Xcal_q)$, where $\Xcal_q$ is the set of all Dirichlet characters modulo $q$ and let $f\in C_c^\infty(\R)$. Then we have 
\begin{equation}
\lim_{M\to \infty}\limsup_{q\to\infty}\E\abr{\abs{L_q(f)-\mathcal{L}_{M,q}(f)}^2}=0,
\end{equation}
where $\Lcal_{M,q}(f):=\int_{\R}f(x)\sum_{n=1}^M\frac{\chi_q(n)}{n^{\half+ix}}\dx$.
\end{proposition}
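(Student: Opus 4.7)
The plan is to compute the second moment as a two-variable integral, apply orthogonality of Dirichlet characters \eqref{eq:orthogonality relation for Dirichlet characters}, and decompose the resulting kernel over common residue classes via the Hurwitz zeta. Writing $L_q(f) - \Lcal_{M,q}(f) = \int f(x)[L(1/2+ix,\chi_q) - \Lcal_M(1/2+ix,\chi_q)]\,\dx$ with $\Lcal_M(s,\chi) := \sum_{n\leq M}\chi(n)n^{-s}$, squaring, and averaging (using the finiteness of $\vp(q)$ to exchange average with integrals) yields
\begin{equation}
\E\abs{L_q(f)-\Lcal_{M,q}(f)}^2 = \iint_{\R\times\R} f(x)\overline{f(y)}\,K_M(x,y,q)\,\dx\dy,
\end{equation}
with kernel $K_M(x,y,q) = \sum_{n,m>M,\,(n,q)=1,\,n\equiv m\,(\mathrm{mod}\,q)} n^{-(1/2+ix)}m^{-(1/2-iy)}$, understood via analytic continuation. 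Grouping the $(n,m)$-sum by common residue $r\in\{1,\dots,q-1\}$ coprime to $q$ gives $K_M(x,y,q)=\sum_{(r,q)=1}P_r(1/2+ix,M)\,P_r(1/2-iy,M)$, where the Hurwitz identity $\sum_{a\geq 0}(r+aq)^{-s}=q^{-s}\zeta(s,r/q)$ combined with $\zeta(s,r/q)=(r/q)^{-s}+\zeta(s,1+r/q)$ produces
\begin{equation}
P_r(s,M) := \sum_{\substack{n>M,\,(n,q)=1\\n\equiv r\,(\mathrm{mod}\,q)}} n^{-s} = \1_{r>M}\,r^{-s}+q^{-s}\,\zeta(s,1+r/q).
\end{equation}

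Expanding the product $P_r(1/2+ix,M)\,P_r(1/2-iy,M)$ and integrating termwise against $f(x)\overline{f(y)}$ yields three contributions to $\E\abs{L_q(f)-\Lcal_{M,q}(f)}^2$. With $\hat{f}_n:=\hat{f}(\log n/(2\pi))$ and $F_\alpha(x):=f(x)\,\zeta(1/2+ix,1+\alpha)$, these are the diagonal $(\mathrm{D})=\sum_{r>M,\,(r,q)=1}|\hat{f}_r|^2/r$, a cross piece $(\mathrm{X})=2\Re\sum_{r>M,\,(r,q)=1}\hat{f}_r\,\overline{\widehat{F}_{r/q}(\log q/(2\pi))}/\sqrt{rq}$, and a tail piece $(\mathrm{H})=q^{-1}\sum_{(r,q)=1}|\widehat{F}_{r/q}(\log q/(2\pi))|^2$.

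I would then bound each piece. The diagonal $(\mathrm{D})$ is dominated by the $q$-independent quantity $\sum_{n>M}|\hat{f}_n|^2/n$, which tends to $0$ as $M\to\infty$ by \eqref{eq:convergence of a series of Fourier transforms of f}. For $(\mathrm{X})$ and $(\mathrm{H})$, the essential observation is that $F_\alpha\in C_c^\infty(\R)$ with all $C^k$-seminorms bounded uniformly in $\alpha\in[0,1]$: indeed $\zeta(s,1+\alpha)$ is jointly smooth in $(s,\alpha)$ away from its only pole $s=1$, which is off the critical line. Integration by parts therefore yields $|\widehat{F}_\alpha(\xi)|\leq C_K(1+|\xi|)^{-K}$ for every $K$, uniformly in $\alpha$, so at $\xi=\log q/(2\pi)$ we obtain
\begin{equation}
|(\mathrm{H})|\leq \frac{\vp(q)}{q}\cdot \frac{C_K^2}{(\log q)^{2K}}\leq \frac{C_K^2}{(\log q)^{2K}},
\end{equation}
while Cauchy--Schwarz with $\sum_{r\leq q-1}|\hat{f}_r|/\sqrt{r}\leq (\sum_r|\hat{f}_r|^2/r)^{1/2}\sqrt{q}$ gives $|(\mathrm{X})|\leq C'_K/(\log q)^K$. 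Hence $\limsup_{q\to\infty}\E\abs{L_q(f)-\Lcal_{M,q}(f)}^2\leq \sum_{n>M}|\hat{f}_n|^2/n$, and sending $M\to\infty$ finishes the proof.

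The main technical obstacle is the identification $P_r(s,M) = \1_{r>M}r^{-s}+q^{-s}\zeta(s,1+r/q)$ on the critical line: the defining series $\sum_{a\geq 1}(r+aq)^{-1/2-ix}$ diverges there, so every manipulation must proceed through the analytic continuation of the Hurwitz zeta, and one must verify that the $C^k$-seminorms of $F_\alpha$ are bounded \emph{uniformly} in $\alpha=r/q\in(0,1)$, since it is precisely this uniformity that furnishes the decisive factor $(\log q)^{-K}$ upon summing over the $\vp(q)\leq q$ residue classes --- without it, the bounds would only be $O(1)$. A more elementary route, better aligned with the preparatory \Cref{lem:estimates for r and t sums 1}, is to Taylor-expand $(1+r/(aq))^{-1/2-ix}$ as a binomial series in $r/q$ and handle the resulting absolutely convergent $\zeta(1/2+ix+b)$ sums ($b\geq 1$) termwise, reducing the problem to combinatorial sums over residues of exactly the form that the lemma controls.
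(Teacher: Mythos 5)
Your argument is correct, but it takes a genuinely different route from the paper's. The paper first splits off the principal character (whose Dirichlet series diverges on the critical line) and kills its contribution by hand, using $\vp(q)\geq q\,2^{-\omega(q)}$ together with $\omega(q)=O(\log q/\log\log q)$; for the non-principal part it keeps the kernel $\1_{n\equiv l\,(\mathrm{mod}\,q)}-1/\vp(q)$, writes $n=mq+r$, Taylor-expands $(mq+r)^{-1/2-ix}$ about $(mq)^{-1/2-ix}$, and controls the resulting blocks $S_M,S_L,S_{L,L}$ with \Cref{lem:estimates for r and t sums 1} and the bound $|\hat f(\log n/(2\pi))|\leq C_k(\log n)^{-k}$. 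You instead establish, for every character \emph{including} $\chi_0$, the analytic-continuation identity $L(s,\chi)-\sum_{n\leq M}\chi(n)n^{-s}=\sum_{(r,q)=1}\chi(r)P_r(s,M)$ with $P_r(s,M)=\1_{r>M}r^{-s}+q^{-s}\zeta(s,1+r/q)$, so that orthogonality reduces the second moment to the finite sum $\sum_{(r,q)=1}|c_r|^2$ with $c_r=\1_{r>M}r^{-1/2}\hat f(\log r/(2\pi))+q^{-1/2}\widehat{F}_{r/q}(\log q/(2\pi))$; the decisive input is then the rapid decay of $\widehat{F}_\alpha$ at frequency $\log q/(2\pi)$, uniformly in $\alpha\in[0,1]$, which you correctly identify as the crux and which does hold because $\zeta(s,1+\alpha)$ is jointly smooth on $\{\Re(s)=1/2\}\times[0,1]$ (the pole $s=1$ is off the critical line). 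Your route buys a cleaner treatment of the principal character --- it is absorbed automatically into the residue-class decomposition, with no need for the $2^{\omega(q)}$ estimates --- and it avoids the auxiliary $L\to\infty$ limit and the Taylor-expansion bookkeeping, while delivering the quantitative rate $O_K((\log q)^{-K})$ for the cross and tail pieces. The one point requiring care is that your opening step ``squaring and averaging yields the kernel $K_M$'' must not be read as a termwise expansion of the (divergent) series for $L(1/2+ix,\chi_0)$: the correct order of operations is to continue each residue-class sum $P_r$ analytically from $\Re(s)>1$ first, and only then apply orthogonality to the resulting \emph{finite} sums over residues --- exactly the issue your closing paragraph flags, so I count it as correctly handled rather than as a gap.
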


\begin{proposition}
\label{prop:second claim of the proof of the main theorem}
Let $f$ be as in \Cref{prop:first claim of the proof of the main theorem} and fix $M>0$. Then as $q\to \infty$ we have
\begin{equation}
\Lcal_{M,q}(f)\overset{d}{\to}\Lcal_{M,\omega}(f):=\int_{\R}f(x)\sum_{n=1}^M\frac{\omega_n}{n^{\half+ix}}\dx,
\end{equation}
where $\Lcal_{M,q}(f)$ is as in \Cref{prop:first claim of the proof of the main theorem}, and the random variables $\omega_n$ are defined as in \Cref{eq:def of omegan for nonprime n}.
\end{proposition}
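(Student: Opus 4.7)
The plan is to reduce the statement to a finite-dimensional continuous mapping problem so that \Cref{lem:convergence of characters for general integers} can be applied directly.

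First, since the sum inside $\Lcal_{M,q}(f)$ has only finitely many terms and $f\in C_c^\infty(\R)$ so that each integral is absolutely convergent, we may swap the finite sum and the integral and use the definition of the Fourier transform as in \Cref{lem:Fourier representation of Lf} to write
\begin{equation}
\Lcal_{M,q}(f)=\sum_{n=1}^M\frac{\chi_q(n)}{n^{\half}}\,\hat{f}\!\rbr{\frac{1}{2\pi}\log(n)},
\qquad
\Lcal_{M,\omega}(f)=\sum_{n=1}^M\frac{\omega_n}{n^{\half}}\,\hat{f}\!\rbr{\frac{1}{2\pi}\log(n)}.
\end{equation}
Both quantities are thus explicit continuous functions of the random vectors $(\chi_q(1),\dots,\chi_q(M))\in\C^M$ and $(\omega_1,\dots,\omega_M)\in\C^M$ respectively; namely, they are the images under the deterministic linear map
\begin{equation}
g\colon\C^M\to\C,\qquad g(z_1,\dots,z_M):=\sum_{n=1}^M\frac{z_n}{n^{\half}}\,\hat{f}\!\rbr{\frac{1}{2\pi}\log(n)}.
\end{equation}

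Next, \Cref{lem:convergence of characters for general integers} applied to $n_i=i$, $i=1,\dots,M$, yields the joint convergence in distribution
\begin{equation}
(\chi_q(1),\chi_q(2),\dots,\chi_q(M))\overset{d}{\to}(\omega_1,\omega_2,\dots,\omega_M)
\end{equation}
as $q\to\infty$ in $\C^M$. Since $g$ is a fixed continuous (in fact linear) map $\C^M\to\C$, the continuous mapping theorem immediately gives $\Lcal_{M,q}(f)=g(\chi_q(1),\dots,\chi_q(M))\overset{d}{\to}g(\omega_1,\dots,\omega_M)=\Lcal_{M,\omega}(f)$ as $q\to\infty$.

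There is no substantial obstacle: the only points to double-check are the exchange of the integral and the (finite) sum, which is trivial by linearity and compactness of $\supp(f)$, and that \Cref{lem:convergence of characters for general integers} truly delivers joint (not merely marginal) convergence of all $M$ coordinates, which it does since its proof proceeds via joint mixed moments. The truly delicate statement that does the heavy lifting is \Cref{lem:convergence of characters for general integers} itself; here we merely cash in on it.
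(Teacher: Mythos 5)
Your proof is correct and takes essentially the same route as the paper's: both invoke \Cref{lem:convergence of characters for general integers} for the joint convergence of $(\chi_q(1),\dots,\chi_q(M))$ and then apply the continuous mapping theorem to a continuous function of these $M$ coordinates. The only cosmetic difference is that you first pass to the Fourier-transform representation so that the map is explicitly linear on $\C^M$, whereas the paper keeps the integral form of the map and simply notes its continuity.
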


\begin{proposition}
\label{prop:third claim of the proof of the main theorem}
Let $f$ be as in \Cref{prop:first claim of the proof of the main theorem}. Then we have
\begin{equation}
\lim_{M_1,M_2\to\infty}\E\abr{\abs{\Lcal_{M_1,\omega}(f)-\zeta_{M_2,\mathrm{rand}}(f)}^2}=0.
\end{equation}
where $\Lcal_{M_1,\omega}$ is as in \Cref{prop:second claim of the proof of the main theorem} and $\zeta_{M_2,\mathrm{rand}}$ is the truncated randomized Euler product with
\begin{equation}
\zeta_{M_2,\mathrm{rand}}(f):=\int_{\R}f(x)\prod_{p\leq M_2}(1-\omega_p p^{-\half-ix})^{-1}\dx.
\end{equation}.
\end{proposition}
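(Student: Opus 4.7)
The plan is to expand both quantities as Dirichlet-like series in the orthonormal family $\{\omega_n\}_{n\in\N}$ and then read off the $L^2$-norm of the difference by orthogonality. For the randomized Euler product, since each factor is a geometric series with $|\omega_p p^{-\half-ix}|=p^{-\half}<1$ and only finitely many factors are present, unique prime factorization together with a finite multi-index Fubini yields the absolutely convergent expansion
\begin{equation*}
\prod_{p\leq M_2}\bigl(1-\omega_p p^{-\half-ix}\bigr)^{-1}=\sum_{n\in S(M_2)}\frac{\omega_n}{n^{\half+ix}},
\end{equation*}
where $S(M_2):=\{n\in\N:\text{every prime factor of $n$ is $\leq M_2$}\}$ is the set of $M_2$-smooth positive integers. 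The estimate $\sum_{n\in S(M_2)}n^{-\half}=\prod_{p\leq M_2}(1-p^{-\half})^{-1}<\infty$ provides the absolute convergence, and since $f$ is compactly supported the partial sums are dominated by an integrable function uniformly in $x$. Dominated convergence therefore allows swapping sum and integral:
\begin{equation*}
\zeta_{M_2,\mathrm{rand}}(f)=\sum_{n\in S(M_2)}\frac{\omega_n}{n^{\half}}\hat{f}\!\left(\tfrac{1}{2\pi}\log n\right),
\end{equation*}
and by the same (in fact easier) reasoning as in \Cref{lem:Fourier representation of Lf},
\begin{equation*}
\Lcal_{M_1,\omega}(f)=\sum_{n=1}^{M_1}\frac{\omega_n}{n^{\half}}\hat{f}\!\left(\tfrac{1}{2\pi}\log n\right).
\end{equation*}

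Next, write $a_n := n^{-\half}\hat{f}(\tfrac{1}{2\pi}\log n)$ and define $A_1:=\{1,\dots,M_1\}\setminus S(M_2)$ and $A_2:=S(M_2)\setminus\{1,\dots,M_1\}$. The difference equals $\sum_{n\in A_1}a_n\omega_n-\sum_{n\in A_2}a_n\omega_n$, so expanding the square and applying the orthogonality $\E[\omega_n\overline{\omega_m}]=\delta_{n,m}$ from \Cref{lem:convergence of characters for general integers} gives
\begin{equation*}
\E\abr{\abs{\Lcal_{M_1,\omega}(f)-\zeta_{M_2,\mathrm{rand}}(f)}^2}=\sum_{n\in A_1\cup A_2}\frac{|\hat{f}(\tfrac{1}{2\pi}\log n)|^2}{n}.
\end{equation*}
The key observation is that both $A_1$ and $A_2$ are supported in a tail: if $n\leq M_1$ but $n\notin S(M_2)$, then $n$ carries a prime factor $p>M_2$, forcing $n\geq p>M_2$; while $A_2\subset\{n>M_1\}$ by definition. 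Hence
\begin{equation*}
\E\abr{\abs{\Lcal_{M_1,\omega}(f)-\zeta_{M_2,\mathrm{rand}}(f)}^2}\leq \sum_{n>M_2}\frac{|\hat{f}(\tfrac{1}{2\pi}\log n)|^2}{n}+\sum_{n>M_1}\frac{|\hat{f}(\tfrac{1}{2\pi}\log n)|^2}{n},
\end{equation*}
and both tails tend to $0$ as $M_1,M_2\to\infty$ by the summability \ref{eq:convergence of a series of Fourier transforms of f}.

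There is no substantive obstacle: everything reduces to the orthonormality of $\{\omega_n\}$, the summability estimate \ref{eq:convergence of a series of Fourier transforms of f} coming from the smoothness of $f$, and the elementary observation that the symmetric difference $\{1,\dots,M_1\}\triangle S(M_2)$ is always contained in $\{n>\min(M_1,M_2)\}$. The only step requiring a little care is the sum--integral swap in the definition of $\zeta_{M_2,\mathrm{rand}}(f)$, and this is controlled immediately by the uniform-in-$x$ bound $\sum_{n\in S(M_2)}n^{-\half}<\infty$.
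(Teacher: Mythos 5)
Your proposal is correct and follows essentially the same route as the paper: expand both $\Lcal_{M_1,\omega}(f)$ and $\zeta_{M_2,\mathrm{rand}}(f)$ in the orthonormal family $\{\omega_n\}$ (the Euler product becoming a sum over $M_2$-smooth integers), apply $\E[\omega_n\overline{\omega_m}]=\delta_{n,m}$, and reduce everything to the convergent series \ref{eq:convergence of a series of Fourier transforms of f}. Your final bookkeeping via the symmetric difference, bounded by the two tails beyond $M_1$ and $M_2$, is in fact a slightly cleaner and more quantitative way to handle the joint limit than the paper's argument that its three resulting sums each converge to the same finite value.
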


\begin{remark}
The limits established in \Cref{prop:first claim of the proof of the main theorem} and \Cref{prop:third claim of the proof of the main theorem} imply the condition \Cref{eq:extra condition} in \Cref{thm:approximation} in their respective contexts.
\end{remark}

\begin{subsection}{Proof of {{\Cref{prop:first claim of the proof of the main theorem}}}}
\label{sec:proof of the first claim}
First, we have
\begin{equation}
\begin{split}
E_{q,M}&:=\E\abr{\abs{L_q(f)-\Lcal_{M,q}(f)}^2}
\\
&\,\,=\E\abr{\1_{\chi_q\neq \chi_0}\abs{\int_\R f(x)\sum_{n=M+1}^\infty \frac{\chi_q(n)}{n^{\half+ix}}\dx}^2}+\P(\chi_q=\chi_0)\abs{L_q^{(0)}(f)-\Lcal_{M,q}^{(0)}(f)}^2
\\
&\,\,=:E_{q,M}^{(1)}+E_{q,M}^{(2)},
\end{split}
\end{equation}
where $L_q^{(0)}$ and $\Lcal_{M,q}^{(0)}$ are defined as $L_q$ and $\Lcal_{M,q}$ with $\chi_q=\chi_0$, that is, the principal character. 

Let us first consider the second term $E_{q,M}^{(2)}$. Observe that $\P(\chi_q=\chi_0)=1/\vp(q)$ and $|\Lcal_{M,q}^{(0)}(f)|=\bigO_M(1)$ for fixed $M$. Thus, it is enough to show that $1/\vp(q)\to 0$ as $q\to \infty$ and
\begin{equation}
\frac{1}{\vp(q)}|L_q^{(0)}(f)|^k\overset{q\to \infty}{\longrightarrow}0
\end{equation}
for $k=1,2$. It is sufficient to prove the case $k=2$. Using the representation of the principal $L$-function via the Riemann $\zeta$-function given in \Cref{eq:principal Lfunction with zeta function}, we obtain by taking the absolute values inside the integral
\begin{equation}
\begin{split}
\frac{1}{\vp(q)}|L_q^{(0)}(f)|^2
& \leq \frac{1}{\vp(q)}\rbr{\int_\R|f(x)|\abs{\zeta\rbr{1/2+ix}}\abs{\prod_{p|q}(1-p^{-\half-ix})}\dx}^2.
\\
&\leq\frac{\norm{f\zeta(1/2+i\cdot)}_{L^1(\R)}^2}{\vp(q)} \prod_{p\mid q}(1+p^{-\half})^2
\\
&\leq \frac{2^{2\omega(q)}}{\vp(q)}\norm{f\zeta(1/2+i\cdot)}_{L^1(\R)}^2,
\end{split}
\end{equation}
where $\omega(q)=\sum_{p\mid q}1$ is the number of distinct prime factors of $q$.
The $L^1$ norm is finite because $f$ has compact support and $\zeta$ is analytic in a neighborhood of this support. In particular, $\zeta$ is bounded in the support of $f$. Then we have $\vp(q)=q\prod_{p|q}(1-p^{-1})$, where the product is over distinct prime factors of $q$ \cite[Theorem 62]{HaWr60a}. It follows that $\vp(q)\geq q 2^{-\omega(q)}$. Moreover, along the subsequence that $q$ is the product of $r$ first primes and we let $r\to \infty$, we have
\begin{equation}
\omega(q)\lesssim \frac{\log(q)}{\log\log(q)},
\end{equation}
where $\lesssim$ means $\leq$ up to a multiplicative constant that is independent of $q$, see \cite[page 355]{HaWr60a}. By an easy estimate using the monotonicity of the RHS we also obtain the same estimate for general $q$. Putting all this together, we obtain
\begin{equation}
\begin{split}
\frac{1}{\vp(q)}|L_q^{(0)}(f)|^2&\leq \norm{f\zeta(1/2+i\cdot)}_{L^1(\R)}^2\frac{2^{3\omega(q)}}{q}
\lesssim
\frac{2^{\frac{C\log(q)}{\log\log(q)}}}{q}
=q^{C\log(2)/\log\log(q)-1}
\overset{q\to \infty}{\longrightarrow}0.
\end{split}
\end{equation}
for some universal constant $C>0$ independent of $q$. Hence,  $\limsup_{q\to\infty}E_{q,M}^{(2)}=0$.

Next we analyze the first term $E_{q,M}^{(1)}$. By  \Cref{lem:Fourier representation of Lf} and the fact that the expectation involves only a finite sum, we have
\begin{equation}
\begin{split}
E_{q,M}^{(1)}&:=\E\abr{\1_{\chi_q\neq \chi_0}\abs{\int_\R f(x)\sum_{n=M+1}^\infty \frac{\chi_q(n)}{n^{\half+ix}}\dx}^2}
\\
&\,\,=\lim_{L\to \infty}\E\abr{\1_{\chi_q\neq \chi_0}\abs{\sum_{n=M+1}^{Lq}\frac{\chi_q(n)}{n^{\half}}\hat{f}\rbr{\frac{1}{2\pi}\log(n)}}^2}
\\
&\,\,=\lim_{L\to \infty}\sum_{n=M+1}^{Lq}\sum_{l=M+1}^{Lq}\frac{\E[\1_{\{\chi_q\neq\chi_0\}}\chi_q(n)\overline{\chi_q(l)}]}{n^{\half}l^{\half}}\hat{f}\rbr{\frac{1}{2\pi}\log(n)}\overline{\hat{f}\rbr{\frac{1}{2\pi}\log(l)}},
\end{split}
\end{equation}
where we have w.l.o.g multiplied the upper limit of the sums by $q$ for convenience of the below analysis. This does not change the result. Indeed, since the series converges, its partial sums converge along the sequence $\{i\}_{i\in \N}$. Thus, also along the subsequences $\{iq\}_{i\in\N}$ for all $q$. It is convenient to write the series as a limit already at this stage, since it is not absolutely convergent. Therefore, we would otherwise need to specify the interpretation of the resulting double series before doing any manipulations on it. 

Next, we use $\1_{\chi_q\neq\chi_0}=1-\1_{\chi_q=\chi_0}$ and apply the orthogonality relation \Cref{eq:orthogonality relation for Dirichlet characters} of the characters to evaluate the expectation. This yields
\begin{equation}
E_{q,M}^{(1)}=\lim_{L\to \infty}\sum_{n=M+1}^{Lq}\sum_{l=M+1}^{Lq}\frac{\1_{(n,q)=1}\1_{(l,q)=1}}{n^{\half}l^{\half}}\rbr{\1_{n\equiv l \mod q}-\frac{1}{\vp(q)}}\hat{f}\rbr{\frac{1}{2\pi}\log(n)}\overline{\hat{f}\rbr{\frac{1}{2\pi}\log(l)}}.
\end{equation}

We may also write $n=mq+r$ and $l=sq+t$ with $0<r,t<q$  (since Dirichlet characters vanish if their argument is divisible by $q$) and $m,s=0,1,\dots,L-1$. Furthermore, without loss of generality, we may assume that $q>M+1$ since we are taking $q\to \infty $ first. Thus, we obtain
\begin{equation}
\begin{split}
\label{eq:decomposition of the Lfunction variance}
E_{q,M}^{(1)}
&=\sum_{r,t=M+1}^{q-1}\rbr{\delta_{r,t}-\frac{1}{\vp(q)}}\frac{\1_{(r,q)=1}\1_{(t,q)=1}}{\sqrt{r}\sqrt{t}}\hat{f}\rbr{\frac{1}{2\pi}\log(r)}\overline{\hat{f}\rbr{\frac{1}{2\pi}\log(t)}}
\\
&\quad+\lim_{L\to\infty}\bigg\{\sum_{m=1}^{L-1}\sum_{r=1}^{q-1}\sum_{t=M+1}^{q-1}\rbr{\delta_{r,t}-\frac{1}{\vp(q)}}\frac{\1_{(r,q)=1}\1_{(t,q)=1}}{\sqrt{mq+r}\sqrt{t}}\hat{f}\rbr{\frac{1}{2\pi}\log(mq+r)}\overline{\hat{f}\rbr{\frac{1}{2\pi}\log(t)}}
\\
&\quad+\sum_{s=1}^{L-1}\sum_{t=1}^{q-1}\sum_{r=M+1}^{q-1}\rbr{\delta_{r,t}-\frac{1}{\vp(q)}}\frac{\1_{(r,q)=1}\1_{(t,q)=1}}{\sqrt{r}\sqrt{sq+t}}\hat{f}\rbr{\frac{1}{2\pi}\log(r)}\overline{\hat{f}\rbr{\frac{1}{2\pi}\log(sq+t)}}
\\
&\quad+\sum_{m=1}^{L-1}\sum_{s=1}^{L-1}\sum_{r,t=1}^{q-1}\rbr{\delta_{r,t}-\frac{1}{\vp(q)}}\frac{\1_{(r,q)=1}\1_{(t,q)=1}}{\sqrt{mq+r}\sqrt{sq+t}} \hat{f}\rbr{\frac{1}{2\pi}\log(mq+r)}\overline{\hat{f}\rbr{\frac{1}{2\pi}\log(sq+t)}}\bigg\}
\\
&=:S_M+\lim_{L\to\infty}(S_L(M)+\overline{S_L(M)}+S_{L,L}),
\end{split}
\end{equation}
with natural identifications of the sums on the last line. Note that we have again used the fact that that $\1_{(xq+y,q)=1}=\1_{(y,q)=1}$ for all $x,y\in\N$.  

$S_M$ is the only term above that also requires the $M\to\infty$ limit to vanish.
It turns out that the terms $S_L(M),\overline{S_L(M)}$ and $S_{L,L}$ vanish already as we take the limit $q\to \infty$. The strategy is to show that each term above will tend to zero in absolute value. Furthermore, for the three last terms $S_L(M),\overline{S_L(M)}$ and $S_{L,L}$, we first show that the $L\to \infty$ limit of the absolute values is finite uniformly in $q$ allowing us to take the $q\to\infty$ limit termwise. Then we show that the logarithmic Fourier decay established in \Cref{eq:log bounds on the Fourier transform of $f$} suffices to ensure each term vanishes as $q\to \infty$.

For $S_M$ we have
\begin{equation}
\label{eq:bound for SM}
\begin{split}
|S_M|\leq\sum_{r=M+1}^{q-1}\frac{\abs{\hat{f}((1/2\pi)\log(r))}^2}{r}+\rbr{\frac{1}{\sqrt{\vp(q)}}\sum_{r=M+1}^{q-1}\frac{1}{\sqrt{r}}\abs{\hat{f}\rbr{\frac{1}{2\pi}\log(r)}}}^2
\end{split}
\end{equation} 
As $q\to \infty$, the first term converges to the tail of a convergent series by the discussion after \Cref{lem:Fourier representation of Lf}, which vanishes as $M\to\infty$.

Without loss of generality, we may further assume that $M+1<\lfloor \sqrt{q-1}\rfloor$ so that we have 
\begin{equation}
\frac{1}{\sqrt{\vp(q)}}\sum_{r=M+1}^{q-1}\frac{\abs{\hat{f}\rbr{\frac{1}{2\pi}\log(r)}}}{\sqrt{r}}=\frac{1}{\sqrt{\vp(q)}}\sum_{r=M+1}^{\lfloor\sqrt{q-1}\rfloor}\frac{\abs{\hat{f}\rbr{\frac{1}{2\pi}\log(r)}}}{\sqrt{r}}+\frac{1}{\sqrt{\vp(q)}}\sum_{r=\lfloor\sqrt{q-1}\rfloor+1}^{q-1}\frac{\abs{\hat{f}\rbr{\frac{1}{2\pi}\log(r)}}}{\sqrt{r}}
\end{equation}
We now consider each term above separately 
\begin{equation}
\begin{split}
\frac{1}{\sqrt{\vp(q)}}\sum_{r=M+1}^{\lfloor\sqrt{q-1}\rfloor}\frac{\abs{\hat{f}\rbr{\frac{1}{2\pi}\log(r)}}}{\sqrt{r}}&\leq\frac{C}{\sqrt{\vp(q)}}\int_M^{\lfloor\sqrt{q-1}\rfloor}\frac{\dx}{\sqrt{x}}
\leq\frac{C\sqrt{\lfloor\sqrt{q-1}\rfloor}}{\sqrt{\vp(q)}}
\overset{*}{\lesssim}q^{-\half+\frac{\delta}{2}}\sqrt{\lfloor\sqrt{q-1}\rfloor}
\overset{q\to \infty}{\longrightarrow}0,
\end{split}
\end{equation}
for any $0<\delta<1/2$ and some constant $C>0$ independent of $q$. Above we have used the constant bound for the Fourier transform from \Cref{eq:constant bound on the Fourier transform of $f$} and the decreasing property of $x\mapsto 1/\sqrt{x}$. The inequality marked with $*$ holds since $\vp(q)/q^{1-\delta}\to\infty$ for any $\delta>0$ \cite[Theorem 327]{HaWr60a}. Note that the constant $C$ may vary, as we avoid introducing multiple constants that are not needed later. We will continue with this convention, when needed.

For the remaining term we require a different lower bound for $\vp$. We use the following lower bound
\begin{equation}
\label{eq:log lower bound for vp}
\vp(q)>\frac{q}{e^{\gamma}\log\log(q)+\frac{3}{\log\log(q)}}\gtrsim \frac{q}{\log\log(q)}
\end{equation}
for $q\geq 3$ \cite[Theorem 8.8.7]{BaSh96a}. Here, $\gamma$ denotes the Euler-Mascheroni constant. Using \Cref{eq:log lower bound for vp}, \Cref{eq:log bounds on the Fourier transform of $f$}, and similar reasoning as above, particularly the decreasing property of $x\mapsto 1/(\log(x))$, we obtain  
\begin{equation}
\begin{split}
\frac{1}{\sqrt{\vp(q)}}\sum_{r=\lfloor\sqrt{q-1}\rfloor+1}^{q-1}\frac{\abs{\hat{f}\rbr{\frac{1}{2\pi}\log(r)}}}{\sqrt{r}}&\leq \frac{C}{\sqrt{\vp(q)}}\sum_{r=\lfloor\sqrt{q-1}\rfloor+1}^{q-1}\frac{1}{\sqrt{r}\log(r)}
\\
&\leq \frac{C}{\sqrt{\vp(q)}}\frac{1}{\log(\lfloor\sqrt{q-1}\rfloor)}\int_{\lfloor\sqrt{q-1}\rfloor}^{q-1}\frac{\dx}{\sqrt{x}}
\\
&\leq \frac{C}{\sqrt{q}}\frac{\sqrt{\log\log(q)}}{\log(\lfloor\sqrt{q-1}\rfloor)}\sqrt{q-1}
\\
&\overset{q\to \infty}{\longrightarrow}0
\end{split}
\end{equation}
for some constant $C>0$, since $1/\log(\lfloor \sqrt{q-1}\rfloor)\lesssim (\half\log(q-1))^{-1}$. This concludes the proof that the limit of the $S_M$ term vanishes as $M\to \infty$.  

Next, let us estimate $S_{L,L}$. By writing out the Fourier transforms and interchanging the finite $r$ and $t$ sums with the integrals, we obtain
\begin{equation}
\begin{split}
S_{L,L}&:=\sum_{m=1}^L\sum_{s=1}^L\sum_{r,t=1}^{q-1}\rbr{\delta_{r,t}-\frac{1}{\vp(q)}}\frac{\1_{(r,q)=1}}{\sqrt{mq+r}}\frac{\1_{(t,q)=1}}{\sqrt{sq+t}}\hat{f}\rbr{\frac{1}{2\pi}\log(mq+r)}\overline{\hat{f}\rbr{\frac{1}{2\pi}\log(sq+t)}}
\\
&\,=\sum_{m=1}^L\sum_{s=1}^L\int_\R\int_\R\d x\d x'f(x)f(x')\underbrace{\sum_{r,t=1}^{q-1}\1_{(r,q)=1}\1_{(t,q)=1}\rbr{\delta_{r,t}-\frac{1}{\vp(q)}}\frac{(mq+r)^{-ix}(sq+t)^{ix'}}{\sqrt{mq+r}\sqrt{sq+t}}}_{:=F_{m,s,q}(x,x')}.
\end{split}
\end{equation}
We now focus on the function 
$F$ and apply the first order estimate 
\begin{equation}
\label{eq:first order Taylor estimate}
(1+a)^{-\half\pm iy}=1+\bigO(a)
\end{equation}
for $a>0$. The implied constant is uniform in $y$ for $y\in K\subset\R$ with compact $K$. Recalling that $x,x'\in\supp(f)$, which is compact we obtain
\begin{equation}
\begin{split}
F_{m,s,q}(x,x')=\frac{(mq)^{-ix}(sq)^{ix'}}{\sqrt{mq}\sqrt{sq}}\sum_{r,t=1}^{q-1}&\1_{(r,q)=1}\1_{(t,q)=1}\rbr{\delta_{r,t}-\frac{1}{\vp(q)}}(1+\bigO(r(mq)^{-1}))(1+\bigO(t(sq)^{-1}))
\\
=\frac{(mq)^{-ix}(sq)^{ix'}}{\sqrt{mq}\sqrt{sq}}\sum_{r,t=1}^{q-1}&\1_{(r,q)=1}\1_{(t,q)=1}\rbr{\delta_{r,t}-\frac{1}{\vp(q)}}\bigO(r(mq)^{-1})\bigO(t(sq)^{-1})
\end{split}
\end{equation}
since we can ignore the rest of the terms when we expand the $(1+\bigO(\cdot))$ terms. Indeed, these vanish upon summation over $r$ and $t$ by the first statement of \Cref{lem:estimates for r and t sums 1}. Furthermore, the second statement of the same lemma with $\sigma=1/2$ implies uniform boundedness in $q$ of the remaining sums multiplied by $q^{-1}$ extracted from the square roots. Since the $(\cdot)^{\pm iy}$, $y=x,x'$, terms are uniformly bounded, we conclude that
\begin{equation}
\sup_{x,x'\in\supp(f)}|F_{m,s,q}(x,x')|\leq C m^{-3/2}s^{-3/2},
\end{equation}
for some universal constant $C>0$ independent of $q$. This gives a uniform (in $q$) upper bound for each term in the sum $S_{L,L}:=\sum_{m,s=1}^LA_{m,s}(q)$ (here, we introduce  notation for the individual terms in the sum $S_{L,L}$) given by 
\begin{equation}
|A_{m,s}(q)|\leq C\frac{\norm{f}_{L^1(\R)}^2}{m^{\frac{3}{2}}s^{\frac{3}{2}}}:=B_{m,s},
\end{equation}
with the same constant $C>0$ as above. Furthermore, $\sum_{m,s=1}^\infty B_{m,s}<\infty$ Thus, the convergence is uniform in $q$, which allows us to take the $q\to \infty$ limit term by term in $S_{\infty,\infty}$. 

Our strategy is then to show that the logarithmic decay from the Fourier terms will make each term converge to zero. We aim to show that $\limsup_{q\to \infty}|A_{m,s}(q)|=0$ for all $m,s\in\N$. The terms $A_{m,s}(q)$ are defined by
\begin{equation}
A_{m,s}(q):=\sum_{r,t=1}^{q-1}\rbr{\delta_{r,t}-\frac{1}{\vp(q)}}\frac{\1_{(r,q)=1}}{\sqrt{mq+r}}\frac{\1_{(t,q)=1}}{\sqrt{sq+t}}\hat{f}((1/2\pi)\log(mq+r))\overline{\hat{f}((1/2\pi)\log(sq+t))}.
\end{equation}
Using the triangle inequality we have
\begin{equation}
\begin{split}
|A_{m,s}(q)|&\leq \sum_{r=1}^{q-1}\frac{1}{\sqrt{mq+r}\sqrt{sq+r}}|\hat{f}((1/2\pi)\log(mq+r))||\hat{f}((1/2\pi)\log(sq+r))|
\\
&\quad+\frac{1}{\sqrt{\vp(q)}}\sum_{r=1}^{q-1}\frac{|\hat{f}((1/2\pi)\log(mq+r))|}{\sqrt{mq+r}}\frac{1}{\sqrt{\vp(q)}}\sum_{t=1}^{q-1}\frac{|\hat{f}((1/2\pi)\log(sq+t))|}{\sqrt{sq+t}}
\\
&=:A_1(q)+A_2^{(1)}(q)A_2^{(2)}(q)
\end{split}
\end{equation}
with natural identifications of the shorthand notations. Then using again \Cref{eq:log bounds on the Fourier transform of $f$} we obtain for the first term
\begin{equation}
A_1(q)\leq C\sum_{r=1}^{q-1}\frac{1}{\sqrt{mq+r}\log(mq+r)}\frac{1}{\sqrt{sq+r}\log(sq+r)}
\end{equation}
for some universal constant $C>0$ independent of $q$.
Note that for $g(x)=1/\sqrt{x}$ or $g(x)=1/\log(x)$ we have $g(xq+y)\leq g(q)$ for all positive integers $x,y$. Thus, we obtain
\begin{equation}
A_1(q)\leq (q-1)\bigO([q\log(q)^2]^{-1})\overset{q\to \infty}{\longrightarrow}0,
\end{equation}
for fixed $m,s\geq 1$. Then it suffices to show that both $A_2^{(i)}(q)$, $i=1,2$ vanish individually in the limit $q\to \infty$. Note also that $g(xq+y)\leq g(y)$ for positive integers $x,q$ and $g$ as above. Therefore, we have 
\begin{equation}
|A_2^{(1)}(q)|\leq \frac{1}{\sqrt{\vp(q)}}\sum_{r=1}^{q-1}\frac{|\hat{f}((1/2\pi)\log(mq+r))|}{\sqrt{r}}.
\end{equation}
This corresponds to the second term in the bound for $S_M$ in \Cref{eq:bound for SM}, with $M=0$ and the substitution $r\to mq+r$ in the Fourier term. Note that there we could have set $M=0$ from the outset. The argument applies here as well with minimal modification to show that $|A_2^{(1)}(q)|\to 0$ as $q\to \infty$ ($A_2^{(2)}(q)$ is completely analogous). We again split the sum at $\lfloor \sqrt{q-1} \rfloor$. Then for the lower part we use the constant bound for the Fourier term and for the higher part we use the logarithmic bound. Next we use again $g(xq+y)\leq g(y)$ for $g(x)=1/\log(x)$. After this the argument is exactly the same as in the analysis of $S_M$ mentioned above. Thus, $\limsup_{q\to \infty}A_{m,s}(q)=0$, and consequently $\limsup_{q\to \infty}S_{\infty,\infty}=0$.

Consider then $S_L(M)$; the case $\overline{S_L(M)}$ is similar. In this case, we may write
\begin{equation}
S_{L}(M)=\sum_{m=1}^L\int_\R\int_\R f(x)f(x')F_{m,0,q}(x,x')\dx\dx',
\end{equation}
where 
\begin{equation}
\begin{split}
\label{eq:cancellation for SL terms}
F_{m,0,q}(x,x')&=\sum_{r=1}^{q-1}\sum_{t=M+1}^{q-1}\1_{(r,q)=1}\1_{(t,q)=1}\rbr{\delta_{r,t}-\frac{1}{\vp(q)}}\frac{(mq+r)^{-ix}t^{ix'}}{\sqrt{mq+r}\sqrt{t}}
\\
&=\frac{(mq)^{-ix}}{\sqrt{mq}}\sum_{t=M+1}^{q-1}\1_{(t,q)=1}t^{ix'-\half}\bigg(1+\bigO(t(mq)^{-1})
-\frac{1}{\vp(q)}\sum_{r=1}^{q-1}\1_{(r,q)=1}(1+\bigO(r(mq)^{-1}))\bigg),
\\
&=\frac{(mq)^{-ix}}{\sqrt{mq}}\sum_{t=M+1}^{q-1}\1_{(t,q)=1}t^{ix'-\half}\bigg(\bigO(t(mq)^{-1})
-\frac{1}{\vp(q)}\sum_{r=1}^{q-1}\1_{(r,q)=1}\bigO(r(mq)^{-1})\bigg)
\end{split}
\end{equation}
where we have again used the first order Taylor estimate \Cref{eq:first order Taylor estimate} and cancelled out the terms constant in $m$. Note that we have suppressed the $M$ dependence from the shorthand notation since it plays no role in the following argument. We will also suppress it from other shorthand notations below.

Taking absolute values and applying triangle inequality repeatedly, we may extend the lower bound of $t$ summation from $M+1$ to $1$ to obtain an upper bound. After this we may extract a common factor $m^{-3/2}$. Furthermore, the remaining sums are of the form
\begin{equation}
\frac{1}{\sqrt{q}}\sum_{t=1}^{q-1}\1_{(t,q)=1}t^{-\half}\rbr{\frac{t}{q}}^a \quad \text{ or } \quad \frac{1}{\vp(q)\sqrt{q}}\sum_{r,t=1}^{q-1}\1_{(t,q)=1}\1_{(r,q)=1}t^{-\half}\rbr{\frac{r}{q}}^a.
\end{equation}
These sums are uniformly bounded in $q$ by the third statement of \Cref{lem:estimates for r and t sums 1}, with $\sigma=1/2$. Thus, we have  
\begin{equation}
\supp_{x,x'\in\supp(f)}|F_{m,0,q}(x,x')|\leq Cm^{-3/2},
\end{equation}
for some universal $C>0$ independent of $q$. Writing $S_L(M):=\sum_{m=1}^LA_m(q)$ we have a uniform (in $q$) upper bound for each term
\begin{equation}
|A_m(q)|\leq C\frac{\norm{f}_{L^1(\R)}^2}{m^{\frac{3}{2}}}:=B_m
\end{equation}
with the same constant as above. Since $\sum_{s=1}^\infty B_m<\infty$, the convergence is again uniform in $q$, allowing us to take $q\to \infty$ limit term by term in $S_\infty(M)$.  

The terms $A_{m}(q)$ are defined as follows:
\begin{equation}
A_{m}(q):=\sum_{r=1}^{q-1}\sum_{t=M+1}^{q-1}\rbr{\delta_{r,t}-\frac{1}{\vp(q)}}\frac{\1_{(r,q)=1}}{\sqrt{mq+r}}\frac{\1_{(t,q)=1}}{\sqrt{t}}\hat{f}((1/2\pi)\log(mq+r))\overline{\hat{f}((1/2\pi)\log(t))}.
\end{equation}
Next we obtain the bound
\begin{equation}
\begin{split}
|A_m(q)|&\leq \sum_{r=1}^{q-1}\frac{|\hat{f}((1/2\pi)\log(mq+r))||\hat{f}((1/2\pi)\log(r))|}{\sqrt{mq+r}\sqrt{r}}
\\
&\quad + \frac{1}{\sqrt{\vp(q)}}\sum_{r=1}^{q-1}\frac{|\hat{f}((1/2\pi)\log(mq+r))|}{\sqrt{mq+r}}\frac{1}{\sqrt{\vp(q)}}\sum_{t=1}^{q-1}\frac{|\hat{f}((1/2\pi)\log(t))|}{\sqrt{t}}
\\
&=:A_1(q)+A_2^{(1)}(q)A_2^{(2)}(q).
\end{split}
\end{equation}
Here, we have again extended the lower limit of the $t$ sum from $M+1$ to $1$ to obtain an upper bound since all terms are non-negative. We also used the same notation as in the analysis of $A_{m,s}$. For the first term, we have
\begin{equation}
A_1(q)\leq C\sum_{r=1}^{q-1}\frac{1}{\sqrt{mq+r}\log(mq+r)}\frac{1}{\sqrt{r}}\leq \frac{1}{\log(q)}\frac{1}{\sqrt{q}}\sum_{r=1}^{q-1}\frac{1}{\sqrt{r}}\overset{q\to 0}{\longrightarrow}0.
\end{equation}
Convergence of the last expression is justified by \Cref{eq:sum for 1/sqrt r}, and the rest follows similar arguments as in the analysis of $A_{m,s}$. The $A_2^{(i)}(q)$, $i=1,2$ terms can be estimated similarly to those in the analysis of $A_{m,s}(q)$. Therefore,  $\limsup_{q\to \infty}A_{m}(q)=0$ and consequently $\limsup_{q\to \infty}S_{\infty}(M)=\limsup_{q\to \infty}\overline{S_{\infty}(M)}=0$. This finishes the proof.
\end{subsection}

\begin{subsection}{Proof of {{\Cref{prop:second claim of the proof of the main theorem}}}} 
\label{sec:proof of the second claim}
Since the random variables $\chi_{q}(n)$ converge jointly in distribution to $\omega_n$ as $q\to\infty$, by \Cref{lem:convergence of characters for general integers}, the continuous mapping theorem (see for example \cite[Theorem 4.27]{Ka02a}) implies that it suffices to show the continuity of the function $F\colon \mathbb{
T}^M\to \C$ defined by 
\begin{equation}
F(\zz):=\int_\R f(x)\sum_{n=1}^M\frac{z_n}{n^{\half+ix}}\dx,
\end{equation}
where $\zz=(z_1,z_2,\dots,z_M)$. This is clear since the sum is finite. 
\end{subsection}

\begin{subsection}{Proof of {{\Cref{prop:third claim of the proof of the main theorem}}}}
To start, observe that 
\begin{equation}
\prod_{p\leq M}(1-\omega_p p^{-\half-ix})^{-1}=\sum_{p \mid n \Rightarrow p\leq M}\frac{\omega_n}{n^{-\half-ix}}.
\end{equation}
Furthermore,
\begin{equation}
\sum_{n=1}^{M_1}\abs{\frac{\omega_n}{n^{\half+ix}}}\leq \sum_{n=1}^{M_1}n^{-\half}<\infty
\end{equation}
for a fixed $M_1$. Similarly,
\begin{equation}
\sum_{p \mid n \Rightarrow p\leq M_2}\abs{\frac{\omega_n}{n^{\half+ix}}}\leq \sum_{ p \mid n \Rightarrow p\leq M_2}\frac{1}{n^{\half}}=\prod_{p\leq M_2}(1-p^{-\half})^{-1}<\infty
\end{equation}
for fixed $M_2$. Therefore the sums involved in the following computation are deterministically bounded and uniformly so with respect to the integration variables. Thus, we have 
\begin{equation}
\begin{split}
E(M_1,M_2):=&\E\abr{\abs{\sum_{n=1}^{M_1}\frac{\omega_n\hat{f}\rbr{\frac{1}{2\pi}\log(n)}}{n^{\half}}-\sum_{p\mid n\Rightarrow p\leq M_2}\frac{\omega_n\hat{f}\rbr{\frac{1}{2\pi}\log(n)}}{n^{\half}}}^2}
\\
&=\sum_{n,m=1}^{M_1}\frac{\E[\omega_n\overline{\omega_m}]\hat{f}\rbr{\frac{1}{2\pi}\log(n)}\overline{\hat{f}\rbr{\frac{1}{2\pi}\log(m)}}}{n^\half m^\half}
\\
&\quad-\sum_{n=1}^{M_1}\sum_{p\mid n\Rightarrow p\leq M_2}\frac{\E[\omega_n\overline{\omega_m}]\hat{f}\rbr{\frac{1}{2\pi}\log(n)}\overline{\hat{f}\rbr{\frac{1}{2\pi}\log(m)}}}{n^\half m^\half}
\\
&\quad-\sum_{n=1}^{M_1}\sum_{p\mid n\Rightarrow p\leq M_2}\frac{\E[\omega_n\overline{\omega_m}]\overline{\hat{f}\rbr{\frac{1}{2\pi}\log(n)}}\hat{f}\rbr{\frac{1}{2\pi}\log(m)}}{n^\half m^\half}
\\
&\quad+\sum_{p\mid n\Rightarrow p\leq M_2}\sum_{p\mid m\Rightarrow p\leq M_2} \frac{\E[\omega_n\overline{\omega_m}]\hat{f}\rbr{\frac{1}{2\pi}\log(n)}\overline{\hat{f}\rbr{\frac{1}{2\pi}\log(m)}}}{n^\half m^\half}.
\end{split}
\end{equation} 
Recall from \Cref{lem:convergence of characters for general integers} that $\E[\omega_n\overline{\omega_m}]=\delta_{n,m}$. Thus, the above becomes
\begin{equation}
E(M_1,M_2)=\sum_{n=1}^{M_1}\frac{\abs{\hat{f}\rbr{\frac{1}{2\pi}\log(n)}}^2}{n}-2\sum_{n=1}^{M_1}\1_{p \mid n\Rightarrow p\leq M_2}\frac{\abs{\hat{f}\rbr{\frac{1}{2\pi}\log(n)}}^2}{n}+\sum_{p\mid n\Rightarrow p\leq M_2}\frac{\abs{\hat{f}\rbr{\frac{1}{2\pi}\log(n)}}^2}{n}
\end{equation}
The first term converges to a finite limit by \Cref{eq:convergence of a series of Fourier transforms of f}. In the second sum, we may take the $M_2\to \infty$ limit either before or after the $M_1\to \infty$ limit, since the convergence of individual terms is monotonic and the summands are non-negative. This implies that the sum converges to the same finite value as the first term. The last term also converges to the same object by monotone convergence. Thus, $E(M_1,M_2)\to 0$ as $M_1,M_2\to \infty$, completing the proof. 
\end{subsection}   
\end{section}

\begin{section}{Convergence for $\Re(s)>1/2$ in the space of analytic functions}
\label{sec:Convergence in the space of analytic functions}
In this section, we prove that to the right of the critical line, that is,  in the half-plane $D:=\{s\in \C\mid\Re(s)>1/2\}$, the convergence of the random Dirichlet L-functions $L(s,\chi_q)$ to the randomized zeta function occurs in the space of analytic functions provided that the event $\{\chi_q=\chi_0\}$ is excluded. The strategy of the proof closely follows that of our main result, \Cref{thm:main theorem}.

Let $H(D)$ denote the space of analytic functions on $D$, equipped with the topology of uniform convergence on compact subsets of $D$. This topology is a Fr\'{e}chet topology,  generated by countably many seminorms 
\begin{equation}
\label{eq:seminorms on compact exhaustion}
\norm{f}_n:=\sup_{s\in K_n}|f(s)|
\end{equation} 
on compact sets $K_n$ with $\cup_n K_n=D$ and $K_{n+1}\subset \mathrm{Int}(K_n)$. The collection $\{K_n\}_{n\in\N}$ is called a compact exhaustion of $D$. We can define a Fr\'{e}chet metric
\begin{equation}
\label{eq:Frechet metric}
d(f,g):=\sum_{n=1}^\infty 2^{-n}\frac{\norm{f-g}_n}{1+\norm{f-g}_n}
\end{equation}
for $H(D)$. The series defining the metric converges uniformly for all $f,g\in H(D)$. This means that limits and expectations can always be interchanged with the series. Therefore, it suffices to prove the analogues of \Cref{prop:first claim of the proof of the main theorem,prop:second claim of the proof of the main theorem,prop:third claim of the proof of the main theorem} with respect to the sup-norm on a fixed compact set $K\subset D$. For convergence in the locally uniform topology it is always sufficient to show convergence uniformly in arbitrary fixed compact set by definition. Furthermore, by uniform convergence of the metric, it is also sufficient to verify the extra condition in \Cref{thm:approximation} for a fixed compact set with the supremum norm.

We can now state the main result of this section.
\begin{theorem}
\label{thm:convergence of the L function in the space of analytic functions}
Let $\chi_q\sim\Unif(\Xcal_q)$, where $\Xcal_q$ denotes the set of all Dirichlet characters modulo $q$, and define $L_q(s):=L(s,\chi_q)$. Then
\begin{equation}
L_q\1_{\{\chi_q\neq\chi_0\}}\overset{d}{\to}\zeta_{\mathrm{rand}}
\end{equation}
in  $H(D)$ as $q\to \infty$. On $D$ the randomized zeta function $\zeta_{\mathrm{rand}}$ is defined pointwise by the randomized Euler product
\begin{equation}
\zeta_{\mathrm{rand}}(s)=\prod_{p}(1-\omega_pp^{-s})^{-1},
\end{equation}
where $\omega_p$ are i.i.d. uniform random variables on the unit circle $\T$.
\end{theorem}
\begin{proof}
Firstly, $\zeta_{\mathrm{rand}}$  
is indeed a random element in $H(D)$, and 
\begin{equation}
\zeta_{\mathrm{rand},N}(s):= \prod_{p\leq N}(1-\omega_pp^{-s})^{-1}
\end{equation}
converges almost surely to $\zeta_{\mathrm{rand}}(s)$ in $H(D)$. Furthermore, analogous convergence result is true for the series
\begin{equation}
\sum_{n=1}^\infty\frac{\omega_n}{n^s}
\end{equation}
and 
\begin{equation}
\Lcal_{M,\omega}(s):=\sum_{n=1}^{M}\frac{\omega_n}{n^{s}},
\end{equation}
where the random variables $\omega_n$ are defined in \Cref{eq:def of omegan for nonprime n}. For the above claims, see \cite[page 184]{La96a}. Also 
\begin{equation}
\Lcal_{M,q}(s):=\sum_{n=1}^M\frac{\chi_q(n)}{n^s}
\end{equation}
is clearly a random element in H(D) for fixed $M$.

We proceed to state the three proposition analogous to \Cref{prop:first claim of the proof of the main theorem,prop:second claim of the proof of the main theorem,prop:third claim of the proof of the main theorem}.
\begin{proposition}
\label{prop:uniform L^2 estimate on arbitrary compact}
Let $\chi_q\sim \mathrm{Unif}(\Xcal_q)$, where $\Xcal_q$ denotes the set of all Dirichlet characters modulo $q$. Then 
\begin{equation}
\lim_{M\to\infty}\limsup_{q\to \infty}\E\abr{\rbr{\sup_{s\in K}\abs{\1_{\{\chi_q\neq\chi_0\}}L_q(s)-\Lcal_{M,q}(s)}}^2}=0
\end{equation}
for arbitrary compact $K\subset D$. 
\end{proposition}

\begin{proposition}
\label{prop:convergence of LcalMq to LcalMomega on fixed compact}
For fixed $M>0$, as $q\to \infty$, 
\begin{equation}
\Lcal_{M,q}\overset{d}{\to}\Lcal_{M,\omega}
\end{equation} 
in $H(D)$.
\end{proposition}

\begin{proposition}
\label{prop:equivalence of the approximations for fixed compact}
Let $K\subset D$ be an arbitrary compact set. Then 
\begin{equation}
\lim_{M_1,M_2\to\infty}\E\abr{\rbr{\sup_{s\in K}\abs{\Lcal_{M_1,\omega}(s)-\zeta_{M_2,\mathrm{rand}}(s)}}^2}=0.
\end{equation}
\end{proposition}

Given these and the discussion before the statement of \Cref{thm:convergence of the L function in the space of analytic functions}, the result follows again by applying twice \Cref{thm:approximation} with $X=H(D)$ this time. 
\end{proof}

\begin{subsection}{Proof of {{\Cref{prop:uniform L^2 estimate on arbitrary compact}}}}
First, observe that  
\begin{equation}
\begin{split}
E_{q,M}&:=\E\abr{\rbr{\sup_{s\in K}|\1_{\{\chi_q\neq\chi_0\}}L_q(s)-\Lcal_{M,q}(s)|}^2}
\\
&\,\,\leq\E\abr{\rbr{\sup_{s\in K}\1_{\{\chi_q\neq\chi_0\}}|L_q(s)-\Lcal_{M,q}(s)|+\sup_{s\in K}\1_{\{\chi_q=\chi_0\}}|\Lcal_{M,q}(s)|}^2}
\\
&\,\,\leq\E\abr{\1_{\{\chi_q\neq\chi_0\}}\sup_{s\in K}|L_q(s)-\Lcal_{M,q}(s)|^2}+\P(\chi_q=\chi_0)\sup_{s\in K}|\Lcal_{M,q}^{(0)}(s)|^2
\\
&\,\,=:E_{q,M}^{(1)}+E_{q,M}^{(2)}.
\end{split}
\end{equation}
For the second term with fixed $M>0$ we have
\begin{equation}
E_{q,M}^{(2)}\leq\frac{1}{\vp(q)}\sup_{s\in K}\rbr{\sum_{n=1}^Mn^{-\Re(s)}}^2\overset{q\to 0}{\longrightarrow}0
\end{equation} 

For the first term $E_{q,M}^1$ we proceed as follows. We choose a compact set $K'\subset D$, say a rectangle such that $K\subset \mathrm{Int}(K')$ and apply the Cauchy integral formula to write
\begin{equation}
(L_q(s)-\Lcal_{M,q}(s))^2=\frac{1}{2\pi i}\oint_{\partial K'}\frac{(L_q(w)-\Lcal_{M,q}(w))^2}{w-s}\d w
\end{equation}  
Taking absolute values and noting that $|w-s|\geq\delta>0$ for all $s\in K$ and $w\in \partial K'$ with $\delta=\dist(\partial K',K)$, we obtain
\begin{equation}
|L_q(s)-\Lcal_{M,q}(s)|^2\leq C\oint_{\partial K'}|L_q(w)-\Lcal_{M,q}(w)|^2\d |w|
\end{equation}
for some constant $C>0$ independent of $s$. Taking supremums over $s\in K$, multiplying by $\1_{\chi_q\neq \chi_0}$ and then taking expectations, we obtain
\begin{equation}
E_{q,M}^{(1)}\leq C\oint_{\partial K'}\E\abr{\1_{\chi_q\neq \chi_0}|L_q(w)-\Lcal_{q,M}(w)|^2}\d|w|.
\end{equation}
Note that the expectation is a finite sum and may consequently be interchanged with the integral. Thus, it remains to show that the integrand tends to zero pointwise and either justify the use of dominated convergence or show that the convergence is uniform on $\partial K'$ and use the fact that $K'$ is a finite rectangle. 

We proceed to show the uniform convergence. We write $s=\sigma+it\in\partial K'$, and obtain
\begin{equation}
\begin{split}
E_{q,M}(s)&:=\E\abr{\1_{\chi_q\neq \chi_0}|L_q(s)-\Lcal_{q,M}(s)|^2}
\\
&\,\,=\lim_{L\to \infty}\E\abr{\1_{\chi_q\neq \chi_0}\abs{\sum_{k=M+1}^{Lq}\frac{\chi_q(k)}{k^{\sigma+it}}}^2}
\\
&\,\,=\lim_{L\to \infty}\E\abr{\1_{\chi_q\neq \chi_0}\sum_{k,l=M+1}^{Lq}\frac{\chi_q(k)\overline{\chi_q(l)}}{k^{\sigma+it}l^{\sigma-it}}},
\end{split}
\end{equation}
where we have again used a convenient rewriting of the infinite sum justified by the fact that the expectation is a finite sum. We can mimic the proof of \Cref{prop:first claim of the proof of the main theorem}. By using the identity $\1_{\chi_q\neq \chi_0}=1-\1_{\chi_q=\chi_0}$ and the orthogonality of the characters to compute the expectation term by term, we may write analogously to \Cref{eq:decomposition of the Lfunction variance}
\begin{equation}
\begin{split}
E_{q,M}(s)&\,\,=\sum_{r,u=M+1}^{q-1}\rbr{\delta_{r,u}-\frac{1}{\vp(q)}}\frac{\1_{(r,q)=1}\1_{(u,q)=1}}{r^{\sigma+it}u^{\sigma-it}}
\\
&\quad \lim_{L\to \infty}\bigg( \sum_{n=1}^{L-1}\sum_{r=1}^{q-1}\sum_{u=M+1}^{q-1}\rbr{\delta_{r,u}-\frac{1}{\vp(q)}}\frac{\1_{(r,q)=1}\1_{(u,q)=1}}{(nq+r)^{\sigma+it}u^{\sigma-it}}
\\
&\quad+\sum_{m=1}^{L-1}\sum_{u=1}^{q-1}\sum_{r=M+1}^{q-1}\rbr{\delta_{r,u}-\frac{1}{\vp(q)}}\frac{\1_{(r,q)=1}\1_{(u,q)=1}}{r^{\sigma+it}(mq+u)^{\sigma-it}}
\\
&\quad+\sum_{n,m=1}^{L-1}\sum_{r,u=1}^{q-1}\rbr{\delta_{r,u}-\frac{1}{\vp(q)}}\frac{\1_{(r,q)=1}\1_{(u,q)=1}}{(nq+r)^{\sigma+it}(mq+u)^{\sigma-it}}\bigg)
\\
&=:S_M(s)+\lim_{L\to\infty}(S_L(M,s)+\overline{S_L(M,s)}+S_{L,L}(s)).
\end{split}
\end{equation}
We shall estimate each term separately and show that they tend to zero in absolute value. We have 
\begin{equation}
\abs{S_M(s)}\leq \sum_{r=M+1}^{q-1}r^{-2\sigma}+\rbr{\frac{1}{\sqrt{\vp(q)}}\sum_{r=M+1}^{q-1}r^{-\sigma}}^2
\end{equation}
As $q\to\infty$, the first term converges to the tail of a superharmonic series since $\sigma>1/2$. Therefore, the first term tends to zero as $M\to\infty$. Next, for $\sigma>1/2$, we estimate
\begin{equation}
\frac{1}{\vp(q)}\sum_{r=M+1}^{q-1}r^{-\sigma}\leq \frac{C}{\sqrt{\vp(q)}}\int_M^{q-1}x^{-\sigma}\dx\leq \frac{C}{\sqrt{\vp(q)}}q^{1-\sigma}\lesssim q^{1/2-\sigma+\frac{\delta}{2}}\overset{q\to\infty}{\longrightarrow}0,
\end{equation}
for any $0<\delta <|1/2-\sigma|/2$ and $\sigma>1/2$. The last bound holds since $\vp(q)/q^{1-\delta}\to\infty$ for any $\delta>0$ \cite[Theorem 327]{HaWr60a}. 
In this analysis, we may replace $\sigma$ with its minimum value $\sigma_{\min}:=\min_{\partial K'\ni s=\sigma+it}(\sigma)$, which is obtained on the left vertical edge of $K'$ ensuring uniform convergence $|S_M(s)|\to 0$

Note that a similar first order Taylor estimate as \ref{eq:first order Taylor estimate} holds also with the replacement $1/2+ix\to s\in D$ and the implied constant is uniform in $s$ in compact subsets of $D$. Consider then the term $|S_{L,L}(s)|$ given by
\begin{equation}
\begin{split}
\label{eq:estimate for SLL out of critical line}
|S_{L,L}(s)|&=\bigg|\sum_{n,m=1}^{L-1}(nq)^{- \sigma-it}(mq)^{- \sigma+it}
\\
&\qquad\times\sum_{r,u=1}^{q-1}\1_{(r,q)=1}\1_{(u,q)=1}\rbr{\delta_{r,u}-\frac{1}{\vp(q)}}(1+\bigO(r(nq)^{-1}))
(1+\bigO(u(mq)^{-1}))\bigg|,
\end{split}
\end{equation}
By the first claim in \Cref{lem:estimates for r and t sums 1} we may again disregard all other terms except the term $\bigO(\cdot)\bigO(\cdot)$, when expanding the product of the $(1+\bigO(\cdot))$ factors. Furthermore, applying triangle inequality and the second claim of \Cref{lem:estimates for r and t sums 1}, yields the upper bound 
\begin{equation}
\begin{split}
\label{eq:estimate for SLL out of critical line}
\sup_{s\in \partial K'}\abs{S_{\infty,\infty}(s)}&\lesssim q^{-2(\sigma_{\min}-\half)}
\sum_{n,m=1}^\infty n^{-\sigma_{\min}-1}m^{-\sigma_{\min}-1}<\infty.
\end{split}
\end{equation} 

Finally, we turn to the analysis of the term $|S_L(M)|$. By rewriting the sums analogously to \Cref{eq:cancellation for SL terms} to obtain necessary cancellation, using similar reasoning as before and applying the third claim of \Cref{lem:estimates for r and t sums 1}, we obtain the bound
\begin{equation}
\label{eq:estimate for SL out of critical line}
\sup_{s\in \partial K'}|S_\infty(M,s)|\leq q^{-(\sigma_{\min}-\half)}\sum_{n=1}^\infty n^{-\sigma_{\min}-1}<\infty.
\end{equation}

Taking $q\to \infty$ in \Cref{eq:estimate for SLL out of critical line,eq:estimate for SL out of critical line} and noting that $\sigma_{\min}>1/2$, we conclude that
\begin{equation}
\lim_{M\to \infty}\limsup_{q\to \infty} E_{q,M}^{(1)}=0.
\end{equation}
This completes the proof.
\end{subsection}

\begin{subsection}{Proof of {{\Cref{prop:convergence of LcalMq to LcalMomega on fixed compact}}}}
The strategy is exactly the same as in the proof of \Cref{prop:second claim of the proof of the main theorem}. This time we consider the map $\T^M\to H(D)$, $z\mapsto \sum_{n=1}^Mz_n n^{-s}$ and show that it is continuous. This is straightforward to verify. Indeed, let $K\subset H(D)$ be arbitrary compact set and $z,w\in\T^M$. Then we have
\begin{equation}
\sup_{s\in K}\abs{\sum_{n=1}^M\frac{z_n}{n^s}-\sum_{n=1}^M\frac{w_n}{n^s}}\leq M\norm{z-w}\sup_{s\in K}\sum_{n=1}^M\frac{1}{n^{\Re(s)}}\overset{z\to w}{\longrightarrow}0.
\end{equation}
Since $K$ was arbitrary, this establishes the result via the continuous mapping theorem \cite[Theorem 4.27]{Ka02a}.
\end{subsection}

\begin{subsection}{Proof of {{\Cref{prop:equivalence of the approximations for fixed compact}}}}
First, fix $s\in D$. As in the proof of \Cref{prop:third claim of the proof of the main theorem}, both $\Lcal_{M_1,\omega}$ and $\zeta_{M_2,\mathrm{rand}}$ are deterministically bounded. Thus, we have
\begin{equation}
\begin{split}
E_{M_1,M_2}(s)&:=\E\abr{|\Lcal_{M_1,\omega}(s)-\zeta_{M_2,\mathrm{rand}}(s)|^2}
\\
&=\sum_{k,l=1}^\infty\E[\omega_k\omega_l]\bigg(\1_{\{k,l\leq M_1\}}k^{-s}l^{-\overline{s}}-\1_{\{k\leq M_1\}\cap\{p|l\Rightarrow p\leq M_2\}}(k^{-s}l^{-\overline{s}}+k^{-s}l^{-\overline{s}})
\\
&\qquad\qquad \1_{\{p|k\Rightarrow p\leq M_2\}\cap\{p|l\Rightarrow p\leq M_2\}}k^{-s}l^{-\overline{s}}\bigg)
\\
&=\sum_{k=1}^{M_1}\frac{1}{k^{2\sigma}}-2\sum_{k=1}^{M_1}\1_{p|n\Rightarrow p\leq M_2}\frac{1}{k^{2\sigma}}+\sum_{k=1}^\infty\1_{p|k\Rightarrow p\leq M_2}\frac{1}{k^{2\sigma}}
\\
&\overset{M_1,M_2\to 0}{\longrightarrow}0
\end{split}
\end{equation}
since $\sigma:=\Re(s)>1/2$. The limits above can be taken in either order. 

Next, choosing $K'$ as in the proof of \Cref{prop:uniform L^2 estimate on arbitrary compact} and proceeding similarly, we obtain
\begin{equation} 
\E\abr{\rbr{\sup_{s\in K}|\Lcal_{M_1,\omega}(s)-\zeta_{M_2,\mathrm{rand}}(s)|}^2}\leq C\oint_{\partial K'}E_{M_1,M_2}(w)\d |w|.
\end{equation}
The claim follows by dominated convergence using the bound 
\begin{equation}
\sup_{w\in \partial K'}|E_{M_1,M_2}(w)|\leq 4\sum_{k=1}^\infty \frac{1}{k^{1+\delta(K')}}<\infty,
\end{equation}
where $\delta(K')=\dist(K',\{\Re(s)=1/2\})=\sigma_{\min}-1/2$ with the earlier notation.
\end{subsection}

\end{section}

\end{document}